\def\deg{\operatorname{deg}}\def\ord{\operatorname{ord}}\def\Tr{\operatorname{Tr}}
\def\AA{{\mathcal{A}}}\def\QQ{{\mathcal{Q}}}\def\BB{{\mathcal{B}}}
\def\DeR{{\operatorname{DeR}}}
\def\gronk{\vphantom{\vrule height 12pt}}
\def\RR{{\mathcal{R}}}\def\TT{{\mathcal{T}}}
\def\PP{{\mathcal{P}}}\def\PPP{{\mathcal{P}_{m,k}}}
\def\UU{{\mathcal{U}}}\def\MMm{\mathcal{M}^m}\def\MMK{\mathcal{M}^{k+1}}\def\MM{\mathcal{M}}
\def\NN{\mathcal{N}}\def\xSS{{\mathcal{S}}}\def\NNk{\mathcal{N}^k}\def\NNv{\NN^\nu}
\def\PPF{{\mathfrak{P}}}\def\QQF{{\mathfrak{Q}}}
\def\PPFmU{{\mathfrak{P}_{m}^{U}}}\def\QQFmU{{\mathfrak{Q}_m^U}}
\def\PPFmkU{{\mathfrak{P}_{m,k}^{U}}}\def\QQFmkU{{\mathfrak{Q}_{m,k}^{U}}}
\def\PPFkkU{{\mathfrak{P}_{k,k}^{U}}}
\def\QQQ{\mathcal{Q}_{m, k}}
\def\xSSmk{{\mathcal{S}_{ m, k}}}
\def\xSSkk{{\mathcal{S}_{k,k}}}\def\xSSKk{{\mathcal{S}_{k+1,k}}}
\def\KKF{\mathfrak{K}}
\def\KKKFPmkU{{\mathfrak{K}_{\mathfrak{P},m, k}}}
\def\KKKFQmkU{{\mathfrak{K}_{\mathfrak{Q}, m, k}}}
\def\KKKFQKkU{{\mathfrak{K}_{\mathfrak{Q},k+1, k}}}
\def\QQQFU{{\mathfrak{Q}_{ m, k}^{U}}}
\def\xSSSFmk{{\mathfrak{S}_{ m, k}}}\def\xSSSFkk{{\mathfrak{S}_{k,k}}}
\def\xSSSFKk{{\mathfrak{S}_{k+1,k}}}
\def\xSSFm{{\mathfrak{S}_m}}\def\xSSFn{{\mathfrak{S}_n}}\def\xSSF{{\mathfrak{S}}}
\def\XiQKkT{\Xi_{\mathfrak{Q}, k+1, k}}
\def\XiQmkT{\Xi_{\mathfrak{Q}, m, k}}
\def\XiQmaT{\Xi_{\mathfrak{Q}, m, 1}}
\def\XiQmbT{\Xi_{\mathfrak{Q}, m, 2}}
\def\TQKk{\Theta_{\mathfrak{Q},k+1, k}}
\def\TQmk{\Theta_{\mathfrak{Q}, m, k}}
\def\XiPmk{{\Xi_{\mathfrak{P},m,k}}}
\def\XiPkk{{\Xi_{\mathfrak{P},k,k}}}
\newtheorem{theorem}{Theorem}[section]
\newtheorem{definition}{Definition}[section]
\newtheorem{lemma}{Lemma}[section]
\newtheorem{remark}{Remark}[section]
\newtheorem{example}{Example}[section]
\begin{document}
\title[Universal curvature identities]
{Universal curvature identities and Euler Lagrange Formulas for K\"ahler manifolds}
\author{P. Gilkey, J.H. Park, and K. Sekigawa}
\address{PG: Mathematics Department, University of Oregon, Eugene OR 97403 USA}
\email{gilkey@uoregon.edu}
\address{JHP: Department of Mathematics, Sungkyunkwan University, Suwon
440-746, Korea \\\phantom{JHP:..A} Korea Institute for Advanced Study, Seoul
130-722, Korea} \email{parkj@skku.edu}
\address{KS: Department of Mathematics, Niigata University, Niigata, Japan.}
\email{sekigawa@math.sc.niigata-u.ac.jp}
\begin{abstract}{We relate certain universal curvature identities
for K\"ahler manifolds to the Euler-Lagrange
equations of the scalar invariants which are defined by
pairing characteristic forms with powers of the K\"ahler form.
\\MSC 2010: 53B35, 57R20.
\\Keywords: Universal curvature identities, K\"ahler manifolds, Euler-Lagrange formulas.}\end{abstract}
\maketitle

\section{Introduction}
Throughout this paper, we shall assume that $(M,g)$ is a
compact smooth oriented Riemannian manifold of dimension $2m$. Let $d\nu_g$
be the Riemannian volume $m$-form. In the introduction, we will establish the notation
that will enable us to state the two main results of this paper -- Theorem~\ref{T1.2}
(which describes the symmetric 2-tensor valued universal curvature identities in the
K\"ahler setting) and Theorem~\ref{T1.3} (which gives the Euler Lagrange equations for
the scalar invariants defined by pairing characteristic forms with powers of the K\"ahler form
in the K\"ahler setting). These two Theorems extend previous results from the
real setting to the K\"ahler setting as we shall discuss subsequently in Remark~\ref{R1.2}.

\subsection{K\"ahler geometry}
A {\it holomorphic structure} on $M$ is an endomorphism $J$ of the
tangent bundle $TM$ so that $J^2=-\operatorname{id}$ and
so that there exist local {\it holomorphic coordinate charts}
$(x^1,\dots,x^{ m},y^1,\dots,y^{ m})$ covering $M$ satisfying
$$J\partial_{x_\alpha}=\partial_{y_\alpha}\quad\text{and}\quad
   J\partial_{y_\alpha}=-\partial_{x_\alpha}\quad\text{for}\quad1\le\alpha\le m\,.
$$
Equivalently, via the Newlander-Nirenberg Theorem \cite{NN57},
this means that the Nijenhuis tensor $N_J$ vanishes
where one defines (see \cite{FN56}):
$$
N_{J}(X,Y):=[X,Y]+J[J X,Y]+J [X,J Y]-[J X,J Y]\,.
$$
In a system of holomorphic coordinates, we define for $1\le\alpha\le m$:
$$\begin{array}{ll}
z^\alpha:=x^\alpha+\sqrt{-1}y^\alpha,\\
\partial_{z_\alpha}:=\textstyle\frac12(\partial_{x_\alpha}-\sqrt{-1}\partial_{y_\alpha}),&
\partial_{\bar z_\alpha}:=\textstyle\frac12(\partial_{x_\alpha}+\sqrt{-1}\partial_{y_\alpha}),
\vphantom{\vrule height 12pt}\\
dz^\alpha:=dx^\alpha+\sqrt{-1}dy^\alpha,&
d\bar z^\alpha:=dx^\alpha-\sqrt{-1}dy^\alpha\,.\vphantom{\vrule height 12pt}
\end{array}$$
Extend $J$ to be complex linear on the complexified tangent bundle to obtain:
$$J\partial_{z_\alpha}=\sqrt{-1}\partial_{z_\alpha}\text{ and }
J\partial_{\bar z_\alpha}=-\sqrt{-1}\partial_{\bar z_\alpha}\,.$$

We can decompose the bundles $S^2M$ and $\Lambda^2M$ of symmetric and anti-symmetric
bilinear forms as $S^2M=S_+^2M\oplus S_-^2M$ and $\Lambda^2M=\Lambda_+^2M\oplus\Lambda_-^2M$ where
$$S_\pm^2M:=\{h\in S^2M:J^*h=\pm h\}\text{ and }
\Lambda_\pm^2M:=\{h\in \Lambda^2M:J^*h=\pm h\}\,.$$
A symmetric bilinear form $h\in S_+^2M$ is said to be {\it Hermitian}; if $h$
is Hermitian, then associated {\it K\"ahler form}
$\Omega_h\in \Lambda_+^2M$ is given by setting:
$$\Omega_h(x,y):=h(x,Jy)\,.$$
Conversely, given $\Omega\in\Lambda_+^2M$, we can recover $h=h_\Omega$
by setting $h(x,y)=\Omega(x,-Jy)$.
This correspondence defines a natural isomorphism between $S_+^2M$ and $\Lambda_+^2M$.

A triple $\MMm:=(M,g,J)$ is said to be a {\it Hermitian}
manifold if $g\in C^\infty(S_+^2M)$ is positive definite (and thus defines a
Riemannian metric on $M$) and if $(M,J)$ is a holomorphic of complex dimension $m$.
Let $\Omega=\Omega_g$. We then have that
\begin{equation}\label{E1.a}
d\nu_g=\textstyle\frac1{ m!}\Omega^{ m}\,.
\end{equation}
A Hermitian manifold $\MMm$ is said to be a {\it K\"ahler manifold}
if $d\Omega=0$. Let $\nabla$
be the Levi-Civiti connection and let
$$
\RR(x,y):=\nabla_x\nabla_y-\nabla_y\nabla_x-\nabla_{[x,y]}\text{ and }
R(x,y,z,w):=g(\RR(x,y)z,w)
$$
be the
{\it curvature operator} and the {\it curvature tensor}, respectively.
We shall also denote these tensors by $\RR_\MM$ and $R_\MM$ when
it is necessary to emphasize the role that $\MMm$ plays.
If $\MMm$ is a K\"ahler manifold, then $\nabla J=0$ and
we have an additional curvature symmetry called the {\it K\"ahler identity}:
\begin{equation}\label{E1.b}
\RR(x,y)J=J\RR(x,y)\text{ i.e. }R(x,y,z,w)=R(x,y,Jz,Jw)\,.
\end{equation}

\subsection{The characteristic classes and characteristic numbers}\label{sect-1.2}
Let $M_{ m}(\mathbb{C})$ be the matrix algebra of all $ m\times m$
complex matrices and let
$\operatorname{GL}_{ m}(\mathbb{C})\subset M_{ m}(\mathbb{C})$
be the associated general linear group.
Let $\xSSFm$ be the  ring of polynomial maps from
$M_{ m}(\mathbb{C})$ to $\mathbb{C}$ which
are invariant under the action of
$\operatorname{GL}_{ m}(\mathbb{C})$, i.e. $\xSS\in\xSSFm$
if and only if
$$
\xSS(ABA^{-1})=\xSS(B)\text{ for all }A\in\operatorname{GL}_{ m}(\mathbb{C})
\text{ and  for all }B\in M_{ m}(\mathbb{C})\,.$$
Define $\Tr_\mu\in\xSSFm$ by setting
$\Tr_\mu(B):=\Tr(B^\mu)$. We then have:
\begin{equation}\label{E1.c}
\xSSFm=\mathbb{C}[\Tr_1,\dots,\Tr_{ m}]\,.
\end{equation}
Let $\xSSSFmk\subset\xSSFm$
be the finite dimensional subspace of maps which
are homogeneous of degree $ k$.  We may then decompose
$$\xSSFm=\oplus_{ k}\xSSSFmk\,.$$
\begin{definition}\label{D1.1}
\rm Let $k$ be a positive integer.
A{\it partition} $\pi$ of $ k$ is a decomposition of $ k=n_1+\dots+n_\ell$
as the sum of positive integers where we order $n_1\ge\dots\ge n_\ell\ge 1$.
Let $\rho( k)$ be the {\it partition function}; this is the number of distinct partitions $\pi$
of $ k$. We use Equation~(\ref{E1.c}) to see that a basis for $\xSSSFmk$ consists of all
monomials of the form $\Tr_1^{\nu_1}\dots\Tr_m^{\nu_m}$ where $\nu_1+2\nu_2+\dots+m\nu_m=k$. Consequently
\begin{equation}\label{E1.d}
\dim\{\xSSSFmk\}=\rho( k)\text{ if } k\le m\,.
\end{equation}
Let $n<m$ and let $B_n\in M_n(\mathbb{C})$.
Let $0_\ell$ be the additive unit of $M_\ell(\mathbb{C})$. The natural map
$B_n\mapsto B_n\oplus 0_{ m-n}$ defines an inclusion of $M_n(\mathbb{C})$ into $M_m(\mathbb{C})$
and induces dually a {\it restriction map}
$r_{m,n}:\xSSFm\rightarrow\xSSFn$ which is characterized by the identity:
\begin{equation}\label{E1.e}
\{r_{m,n}(\xSS_m)\}(B_n):=\xSS_m(B_n\oplus 0_{m-n})\,.
\end{equation}
\end{definition}

\begin{remark}\label{R1.1}
\rm Let $n<m$. Since the restriction map preserves the grading,
$r_{m,n}$ maps $\xSSSFmk$ to $\mathfrak{S}_{n,k}$.
Since $\Tr\{B_n^i\}=\Tr\{(B_n\oplus0_{m-n})^i\}$, $r_{m,n}(\Tr_i)=\Tr_i$. Thus
 Equation~(\ref{E1.c}) shows that $r_{m,n}$ is always a surjective map from
$\mathfrak{S}_{m,k}$ to $\mathfrak{S}_{n,k}$. Furthermore, if $n\ge k$, then
$r_{m,n}$ is an isomorphism
from $\xSSSFmk$ to $\mathfrak{S}_{n,k}$.\end{remark}

Let $\MMm=(M,g,J)$ be a K\"ahler manifold. We use $J$ to give $TM$ a complex
structure and to regard $TM$ as a complex vector bundle; Equation~(\ref{E1.b}) then shows
that $\RR(x,y)$ is complex linear. We regard $\RR$ as a matrix of $2$-forms.
If $\xSSmk\in\xSSSFmk$, then the {\it evaluation} on $\RR$ yields an element
$$\xSSmk(\RR)\in C^\infty(\Lambda^{2 k}M)\,.$$
We have that $\xSSmk(\RR)$ is a closed differential form; the corresponding element
in de Rham cohomology is independent of the particular K\"ahler metric
$g$ on $M$ and is called a {\it characteristic class}:
$$[\xSSmk(\RR)]\in H^{2k}_{\DeR}(M)\,.$$

If $ k= m$, then
we may use the natural orientation of $M$ and integrate over $M$
to define a corresponding {\it characteristic number} which is independent of $g$.
If the complex dimension $ m=1$, then $\dim\{\xSSF_{1,1}\}=1$. If ${{\xSS}}_{1,1}\in\xSSF_{1,1}$,
then there is a universal constant $c=c(\xSS_{1,1})$ so that
$$\int_M\xSS_{1,1}(\RR_\MM) =c\cdot\chi(M)$$
where $\chi(M)$ is the {\it Euler-Poincar\'e} characteristic of $M$. Let
$\operatorname{sign}$ denote the Hirzebruch signature.
If the complex dimension $ m=2$, then $\dim\{\xSSF_{2,2}\}=2$. If
${{\xSS}}_{2,2}\in\xSSF_{2,2}$, then there are universal constants $c_i=c_i({{\xSS}}_{2,2})$ so that:
$$\int_M{{\xSS}}_{2,2}(\RR_\MM) =c_1\cdot\chi(M)+c_2\cdot\operatorname{sign}(M)\,.$$

Give complex projective space $\mathbb{CP}^n$
the Fubini-Study metric. If $\vec\nu=(\nu_1,\dots,\nu_\ell)$, let
$\mathbb{CP}^{\vec\nu}=\mathbb{CP}^{\nu_1}\times\dots\times\mathbb{CP}^{\nu_\ell}$.
This is a compact
homogeneous K\"ahler manifold of complex dimension $\nu_1+\dots+\nu_\ell$.
If $\xSSkk$ is non-trivial as an invariant polynomial,
then the associated characteristic number is non-trivial. We refer to \cite{APS73,H66} for
the proof of:
\begin{lemma}\label{L1.1}
\rm Let $0\ne\xSSkk\in\xSSSFkk$. Then there exists $\vec\nu$ with $k=\nu_1+\dots\nu_\ell$ so
$$\displaystyle\int_{\mathbb{CP}^{\vec\nu}}\xSS_{ k, k}
(\RR_{\mathbb{CP}^{\vec\nu}})d\nu_{\mathbb{CP}^{\vec\nu}} \ne0\,.$$
\end{lemma}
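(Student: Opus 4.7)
The plan is to prove a sharper dual statement. By Definition~\ref{D1.1}, $\xSSSFkk$ has a distinguished basis consisting of the $\rho(k)$ monomials $\Tr_1^{a_1}\cdots\Tr_k^{a_k}$ with $\sum_i i a_i=k$; these are in bijection with partitions $\vec\nu=(1^{a_1}\cdots k^{a_k})\vdash k$. On the other hand, the manifolds $\mathbb{CP}^{\vec\mu}=\mathbb{CP}^{\mu_1}\times\cdots\times\mathbb{CP}^{\mu_s}$ with $\vec\mu\vdash k$ are also $\rho(k)$ in number, and each has complex dimension exactly $k$, so every evaluation $\int_{\mathbb{CP}^{\vec\mu}}\xSSkk(\RR_{\mathbb{CP}^{\vec\mu}})$ makes sense. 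I would show that the resulting $\rho(k)\times\rho(k)$ pairing matrix
$$ M_{\vec\mu,\vec\nu} \;:=\; \int_{\mathbb{CP}^{\vec\mu}}\Tr_1(\RR)^{a_1(\vec\nu)}\cdots\Tr_k(\RR)^{a_k(\vec\nu)} $$
is nonsingular. This implies the Lemma, since any $0\ne\xSSkk\in\xSSSFkk$ expands as a nonzero coordinate vector in the monomial basis, and nondegeneracy of $M$ then forces at least one entry of $M$ applied to that vector to be nonzero, i.e.\ at least one characteristic number $\int_{\mathbb{CP}^{\vec\mu}}\xSSkk(\RR)$ does not vanish.

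To analyze $M$ I would first change basis from the power-sum traces $\Tr_i$ to the elementary symmetric polynomials $c_i$ (the Chern polynomials); this is an invertible linear change over $\mathbb{Q}$ by Newton's identities, so it is equivalent to show that the matrix of \emph{Chern numbers} $c_{\vec\nu}[\mathbb{CP}^{\vec\mu}]$ for $\vec\nu,\vec\mu\vdash k$ is nonsingular. All entries of this matrix are explicit: the total Chern class of projective space is $c(T\mathbb{CP}^n)=(1+x)^{n+1}$ with $x\in H^2(\mathbb{CP}^n)$ the hyperplane generator, and Whitney multiplicativity gives the total Chern class of a product; reading off the top-degree term yields a manifest multinomial-coefficient expression for every entry.

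The main obstacle, and the actual content of the Lemma, is the nondegeneracy of this Chern-number matrix. The cleanest route invokes the classical Milnor--Thom theorem of complex cobordism, which asserts (i) that Chern numbers form a complete rational invariant of $\Omega_{2k}^U$ and (ii) that $\{[\mathbb{CP}^{\vec\mu}]:\vec\mu\vdash k\}$ is a $\mathbb{Q}$-basis of $\Omega_{2k}^U\otimes\mathbb{Q}$; together, nondegeneracy follows at once. A self-contained alternative is to order partitions of $k$ lexicographically and verify by induction that, in the Chern basis, $M$ becomes triangular with nonzero diagonal, with the recursion driven by the product structure $\mathbb{CP}^{\vec\mu}\times\mathbb{CP}^{\nu_j}$ and the explicit computation on individual factors. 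Either way the argument is classical and well documented, so following the authors I would simply appeal to \cite{APS73,H66}.
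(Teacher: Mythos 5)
Your proposal is correct: the paper itself gives no proof of Lemma~\ref{L1.1}, deferring entirely to \cite{APS73,H66}, and your sketch is an accurate reconstruction of the classical argument contained in those references --- nondegeneracy of the $\rho(k)\times\rho(k)$ pairing between degree-$k$ invariant polynomials and products of projective spaces of total complex dimension $k$, established either via the Milnor--Thom description of $\Omega^U_*\otimes\mathbb{Q}$ or by the triangularity of the Chern-number matrix. Since you end by appealing to the same citations the authors use, your route and theirs coincide.
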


\subsection{Scalar valued universal curvature identities}

In the real setting,Weyl's first theorem of invariants \cite{W46}
can be used to show that all polynomial
scalar invariants in the derivatives of the metric arise from contractions
of indices in the curvature tensor and its
covariant derivatives. Let $\{e_i\}$ be a local orthonormal frame for a
Riemannian manifold $(M,g)$ and let $R_{ijkl}$
be the components of the curvature tensor. Adopt the {\it Einstein convention}
and sum over repeated indices to define:
\medbreak\qquad
$E_2:=R_{ijji}$,\qquad
$E_4:=R_{ijji}R_{kllk}-4R_{aija}R_{bijb}+R_{ijkl}R_{ijkl}$, and
\medbreak\qquad
$E_6:=R_{ijji}R_{kllk}R_{abba}-12R_{ijji}R_{aija}R_{bijb}+3R_{abba}R_{ijkl}R_{ijkl}$
\smallbreak
$\qquad\qquad+24R_{aija}R_{bklb}R_{jlik}
+16R_{aija}R_{bjkb}R_{cikc}-24R_{aija}R_{jkln}R_{lnik}$
\smallbreak
$\qquad\qquad+2R_{ijkl}R_{klan}R_{anij}-8R_{kaij}R_{inkl}R_{jlan}$.
\medbreak\noindent
 $E_2$, $E_4$, and $E_6$
 are universally defined scalar invariants of order $\mu=2$, $\mu=4$, and $\mu=6$,
respectively. They
are generically non-zero in real dimension at least $\mu$ but vanish
in lower dimensions; in particular,
they give non-trivial {\it universal curvature identities} in real dimension $\mu-1$.
Modulo a suitable normalization, these are the integrals
of the Chern-Gauss-Bonnet Theorem \cite{C44}
and more generally,
up to rescaling,
the Pfaffian $E_\mu$ gives the only universal curvature identity
of order $\mu$ vanishing identically
in real dimension $\mu-1$.
This fact plays an important role in the proof of the Chern-Gauss-Bonnet theorem
using heat equation methods \cite{G73}.

\begin{definition}\label{D1.2}
\rm Let $\PPF_{ m}$ be the polynomial algebra in the components
of $\RR$, in the components of the covariant derivative $\nabla\RR$, and so forth
 for K\"ahler metrics on manifolds of complex dimension $m$. Let $\PPFmkU$ be the subspace of
polynomials which are homogeneous of degree $2k$ in the derivatives
of the metric and which are invariant under the action of the
unitary group $U( m)$.
\end{definition}
H. Weyl's theorem on invariants of the orthogonal group \cite{W46}
has been extended by Fukami \cite{Fu58} and Iwahori \cite{Iw58} to this setting; all such invariants
arise by contractions of indices using the metric and the K\"ahler form.
In practice, the K\"ahler identity  means that we will not be in fact using the K\"ahler form
to contract indices. Rather, we will contract a lower holomorphic (resp. anti-holomorphic) index against
the corresponding upper holomorphic (resp. anti-holomorphic) index. Thus,
the K\"ahler form $\Omega=-\sqrt{-1}g_{\alpha\bar\beta}dz^{\alpha}\wedge d\bar z^{\beta}$
is given
 by contracting upper against lower indices of the same type; it is not necessary for the
frame to be unitary. We can also contract a lower holomorphic index against a corresponding
lower anti-holomorphic index using the metric relative to a unitary frame.
Thus, for example, the scalar curvature is given
by $\tau=R_{\alpha\bar\alpha\bar\beta\beta}$ modulo a suitable normalizing constant.

\begin{definition}\label{D1.3}
\rm Let $\PPFmkU$ be as defined in Definition~\ref{D1.2}.
Let $\KKKFPmkU\subset\PPFmkU$ be the subspace of invariant local formulas
which are homogeneous of degree $2k$ in the derivatives of the metric and which
vanish when restricted from complex dimension $m$ to complex dimension $k-1$; we shall
give an algebraic characterization presently in Lemma~\ref{L3.1}.\end{definition}

Elements $0\ne\PPP\in\KKKFPmkU$ give universal
curvature identities of degree $2k$ in complex dimension $k-1$.
We sum over repeated
indices in a unitary frame field to define:
\begin{eqnarray*}
&&\PP_{ m,2}^1:=R_{\alpha_1\bar\alpha_1\bar\alpha_3\alpha_4}
R_{\alpha_2\bar\alpha_2\bar\alpha_4\alpha_3}
-R_{\alpha_1\bar\alpha_2\bar\alpha_3\alpha_4}R_{\alpha_2\bar\alpha_1\bar\alpha_4\alpha_3}\,,\\
&&\PP_{ m,2}^2:=R_{\alpha_1\bar\alpha_1\bar\alpha_3\alpha_3}
R_{\alpha_2\bar\alpha_2\bar\alpha_4\alpha_4}
-R_{\alpha_1\bar\alpha_2\bar\alpha_3\alpha_3}R_{\alpha_2\bar\alpha_1\bar\alpha_4\alpha_4}\,.
\end{eqnarray*}
One then has that $P_{ m,2}^1$ and $P_{ m,2}^2$ are
generically non-zero if $ m\ge2$ but vanish
identically in complex dimension $ m=1$. Thus $\PP_{m,2}^1$ and $\PP_{m,2}^2$
 are universal curvature identities in the K\"ahler setting.
One sees this
not by using index notation but by noting that:
$$
\PP_{ m,2}^1:=\textstyle\frac12g(\Tr\{\RR^2\},\Omega^2)\text{ and }
\PP_{ m,2}^2:=\textstyle\frac12g(\Tr\{\RR\}^2,\Omega^2)\,.
$$
We generalize this construction:
\begin{definition}\label{D1.4}
\rm If $\xSSmk\in\xSSSFmk$, define $\XiPmk:\xSSSFmk\rightarrow\PPFmkU$ by setting:
\begin{equation}\label{E1.f}
\XiPmk(\xSSmk):=\textstyle\frac1{ k!}g(\xSSmk(\RR),\Omega^{ k})\,.
\end{equation}
We may use Equation~(\ref{E1.a}) to see that if $m=k$, then
\begin{equation}\label{E1.g}
\Xi_{\mathfrak{P},m,m}(\mathcal{S}_{m,m})d\nu_g=\mathcal{S}_{m,m}(\mathcal{R})\,.
\end{equation}
Thus by Lemma~\ref{L1.1}, $\XiPmk(\xSSmk)$ is generically non-zero
in complex dimension $ m\ge k$ but vanishes
in complex dimension $ m= k-1$. Consequently, $\XiPmk$ takes values in $\KKKFPmkU$.
\end{definition}

 The following result played an important role in the proof
of the Riemann-Roch formula using heat equation methods \cite{G73a}:
\begin{theorem}\label{T1.1}
If $m\ge k$, then
$\XiPmk$ is an isomorphism from $\xSSSFmk$ to $\KKKFPmkU$.
In other words, any scalar valued curvature identity of order $2 k$ that is given
universally by contracting indices in pairs,
that is generically non-zero in complex dimension
$m\ge k$, and that vanishes in complex dimension $m=k-1$ is of this form.
\end{theorem}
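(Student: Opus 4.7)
The plan is to handle injectivity via Lemma~\ref{L1.1} and the base case $m=k$, then derive surjectivity from a dimension bound on $\KKKFPmkU$ (the algebraic content of Lemma~\ref{L3.1}, promised in Definition~\ref{D1.3}).

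\textbf{Injectivity.} First take $m=k$: if $\xSSkk\in\xSSSFkk$ is non-zero, then by Lemma~\ref{L1.1} there exists $\vec\nu$ with $|\vec\nu|=k$ satisfying $\int_{\mathbb{CP}^{\vec\nu}}\xSSkk(\RR)\ne 0$, and Equation~(\ref{E1.g}) rewrites this integral as $\int_{\mathbb{CP}^{\vec\nu}}\XiPkk(\xSSkk)\,d\nu_g$, so the local formula $\XiPkk(\xSSkk)$ is non-zero. For general $m\ge k$, I evaluate $\XiPmk(\xSSmk)$ on a local product metric $g_k\oplus g_{\mathrm{flat}}$ (a K\"ahler metric in complex dimension $k$ times a flat metric on $\mathbb{C}^{m-k}$): the curvature matrix becomes $\RR_k\oplus 0_{m-k}$ and the K\"ahler form splits as $\Omega_k+\Omega_{\mathrm{flat}}$. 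Since $\xSSmk(\RR_k\oplus 0)=(r_{m,k}\xSSmk)(\RR_k)$ by~(\ref{E1.e}) and lives entirely in the first factor, only the $\Omega_k^k$ summand of $(\Omega_k+\Omega_{\mathrm{flat}})^k$ pairs non-trivially with it, giving
$$\XiPmk(\xSSmk)\big|_{g_k\oplus g_{\mathrm{flat}}}=\XiPkk(r_{m,k}\xSSmk)\big|_{g_k}.$$
Hence $\XiPmk(\xSSmk)=0$ forces $r_{m,k}\xSSmk=0$ by the base case, and then $\xSSmk=0$ by Remark~\ref{R1.1}.

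\textbf{Surjectivity.} By Equation~(\ref{E1.d}), $\dim\xSSSFmk=\rho(k)$ whenever $m\ge k$, so it suffices to show $\dim\KKKFPmkU\le\rho(k)$. This bound is the content of Lemma~\ref{L3.1}: among the $U(m)$-invariant polynomials in the components of $\RR,\nabla\RR,\dots$ that are homogeneous of degree $2k$ in derivatives of the metric, the vanishing condition on complex dimension $k-1$ is strong enough to rule out every monomial involving a covariant derivative factor and to reduce the remaining pure-curvature monomials to exactly the $\rho(k)$ trace patterns indexed by partitions of $k$. Injectivity plus matching dimensions then makes $\XiPmk$ an isomorphism.

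\textbf{Main obstacle.} The dimension bound in the surjectivity step is the crux; injectivity is essentially a diagram chase once Lemma~\ref{L1.1} is in hand. The real work lies in Lemma~\ref{L3.1}, which must show that a $U(m)$-invariant of weight $2k$ cannot simultaneously involve a factor $\nabla^j\RR$ with $j\ge 1$ and vanish identically in complex dimension $k-1$ — equivalently, derivative factors ``waste'' indices that would otherwise be needed to force distinctness of the $k$ holomorphic and $k$ anti-holomorphic contractions imposed by the K\"ahler identity~(\ref{E1.b}). Establishing this cleanly requires careful bookkeeping of how index contraction interacts with dimension restriction in the presence of the K\"ahler form.
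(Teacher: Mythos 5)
Your overall architecture --- injectivity from Lemma~\ref{L1.1} via the base case $m=k$ and restriction to product metrics, then surjectivity from the dimension count $\dim\{\xSSSFmk\}=\rho(k)\ge\dim\{\KKKFPmkU\}$ --- is exactly the paper's, and the injectivity half is complete: it reproduces the proof of Lemma~\ref{L3.2}(1c) (Equation~(\ref{E1.g}) plus Lemma~\ref{L1.1}) together with the product/restriction argument and Remark~\ref{R1.1}.

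The gap is in the surjectivity half. You attribute the bound $\dim\{\KKKFPmkU\}\le\rho(k)$ to Lemma~\ref{L3.1}, but that lemma only establishes the algebraic identification $\KKKFPmkU=\ker(r_{m,k-1})\cap\PPFmkU$; it contains no dimension estimate. The bound is Lemma~\ref{L4.3}, and proving it is the bulk of the paper (Section~\ref{S4}). Your heuristic that derivative factors ``waste'' indices does capture the easy half of that argument: each variable $g(A;B)$ has order $|A|+|B|-2\ge2$, and vanishing under $r_{m,k-1}$ together with Lemma~\ref{L3.3} forces every monomial to have length $\ell\ge k$, so $2k=\sum_i\{|A_i|+|B_i|-2\}\ge2\ell\ge2k$ gives $|A_i|=|B_i|=2$ and $\ell=k$, excluding covariant derivatives of curvature. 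But this only shows that elements of $\KKKFPmkU$ are polynomials in the $2$-jets with monomials of length $k$; the space of such $U(m)$-invariants is still far larger than $\rho(k)$. The essential remaining step, which your proposal does not supply, is Lemma~\ref{L4.2}: every nonzero element of $\KKKFPmkU$ must contain a monomial of the normal form $g(1,1;\overline{\sigma(1)},\overline{\sigma(1)})\dots g(k,k;\overline{\sigma(k)},\overline{\sigma(k)})$ for some permutation $\sigma$, so that the coefficients of the $\rho(k)$ conjugacy-class representatives determine the invariant. Establishing that normal form requires the repeated $U(2)$-rotation argument of Lemma~\ref{L3.3}(2) (the ``index touching itself'' induction), which is not routine bookkeeping; without it the dimension count, and hence surjectivity, is unproven.
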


\subsection{Universal curvature identities which are symmetric 2-tensor valued}
In the real setting, let $S^2M\subset\otimes^2T^*M$ be the bundle of symmetric $2$-cotensors
and let $S_2M\subset\otimes^2TM$ be the dual bundle; this is the bundle of symmetric $2$-tensors.
We can extend H. Weyl's theorem first theorem of invariants to
construct polynomial invariants which are $S_2M$
valued by contracting all but 2 indices
and symmetrizing the remaining two indices. For example, we can define:
\begin{eqnarray*}
&&T_2:=R_{ijji}e_k\circ e_k-2R_{ijki}e_j\circ e_k,\\
&&T_4:=-\textstyle\frac14(R_{ijji}R_{kllk}-4R_{ijki}R_{ljkl}+R_{ijkl}R_{ijkl})e_n\circ e_n\\
&&\qquad\qquad+\{R_{klni}R_{klnj}-2R_{knik}R_{lnjl}
-2R_{iklj}R_{nkln}+R_{kllk}R_{nijn}\}e_i\circ e_j\,.
\end{eqnarray*}
The invariants $T_n$ are generically non-zero in real dimension greater than $n$ but
vanish identically in real dimension $n$. The identity $T_2=0$ in real dimension $2$
is the classical identity relating the scalar
curvature and the Ricci tensor; the identity $T_4=0$ in real dimension $4$
is the Berger-Euh-Park-Sekigawa identity \cite{M70,EPS10}.
More generally, such invariants can be formed through the transgression of the
Euler form; we refer to \cite{GPS11} for further details. We also refer to
\cite{GPS12} where the pseudo-Riemannian setting is treated and to \cite{GPS13}
where manifolds with boundary are treated. We note that Navarro and Navarro
\cite{NN13} have applied the theory of  natural operators \cite{KPS93,N13}
to discuss more generally
$p$-covariant identities for any even $p$.

In the K\"ahler setting, let $S^+_2M$ be the bundle dual to $S_+^2M$
and let $\langle\cdot,\cdot\rangle$ denote the natural pairing between these two bundles.
\begin{definition}\label{D1.5}
\rm Let $\QQQFU$ be the space of all $ S_+^2$ valued invariants
which are homogeneous of degree
$2 k$
in the derivatives of the metric and which
are invariant under the action of the unitary group.
We consider the subspace $\KKKFQmkU\subset\QQQFU$ of invariants which
vanish when restricted from complex dimension $m$ to complex dimension $k$;
again, we shall give an algebraic characterization presently in Lemma~\ref{L3.1}.\end{definition}

\begin{example}\label{E1.1}
\rm Let $\{e_\alpha\}$ be a local unitary frame field for $TM$
(viewed as a complex
vector bundle).
We contract holomorphic with anti-holomorphic indices in pairs to construct
the following invariant of degree 2:
$$\mathcal{Q}_{m,1}:=R_{\alpha \bar \alpha_1 \bar r_1 r_1}e_{\alpha_2}\circ
e_{\bar\alpha_2} -R_{\alpha_1 \bar \alpha_2 \bar r_1
r_1}e_{\alpha_2}\circ e_{\bar\alpha_1}\,.
$$
Similarly, we may construct invariants of degree 4: \medbreak\qquad
$\mathcal{Q}_{m,2}^1:=R_{\alpha_1\bar\alpha_1\bar\gamma_1\delta_1}
R_{\alpha_2\bar\alpha_2\bar\delta_1\gamma_1} e_{\alpha_3}\circ
e_{\bar\alpha_3}
+R_{\alpha_1\bar\alpha_3\bar\gamma_1\delta_1}
R_{\alpha_2\bar\alpha_1\bar\delta_1\gamma_1}
e_{\alpha_3}\circ e_{\bar\alpha_2}$ \smallbreak\qquad\qquad
{{$
+R_{\alpha_1\bar\alpha_2\bar\gamma_1\delta_1}R_{\alpha_2\bar\alpha_3\bar\delta_1\gamma_1}
e_{\alpha_3}\circ e_{\bar\alpha_1}
-R_{\alpha_1\bar\alpha_1\bar\gamma_1\delta_1}R_{\alpha_2\bar\alpha_3\bar\delta_1\gamma_1}
e_{\alpha_3}\circ e_{\bar\alpha_2}$}}
\smallbreak\qquad\qquad
$-R_{\alpha_1\bar\alpha_2\bar\gamma_1\delta_1}R_{\alpha_2\bar\alpha_1\bar\delta_1\gamma_1}
e_{\alpha_3}\circ e_{\bar\alpha_3}
-R_{\alpha_1\bar\alpha_3\bar\gamma_1\delta_1}R_{\alpha_2\bar\alpha_2\bar\delta_1\gamma_1}
e_{\alpha_3}\circ e_{\bar\alpha_1}$, \medbreak\qquad
$\mathcal{Q}_{m,2}^2:=R_{\alpha_1\bar\alpha_1\bar\sigma_1\sigma_1}
R_{\alpha_2\bar\alpha_2\bar\sigma_2\sigma_2} e_{\alpha_3}\circ
e_{\bar\alpha_3}
+R_{\alpha_1\bar\alpha_3\bar\sigma_1\sigma_1}R_{\alpha_2\bar\alpha_1\bar\sigma_2\sigma_2}
e_{\alpha_3}\circ e_{\bar\alpha_2}$ \smallbreak\qquad\qquad
 {{$
+R_{\alpha_1\bar\alpha_2\bar\sigma_1\sigma_1}R_{\alpha_2\bar\alpha_3\bar\sigma_2\sigma_2}
e_{\alpha_3}\circ e_{\bar\alpha_1}
-R_{\alpha_1\bar\alpha_1\bar\sigma_1\sigma_1}R_{\alpha_2\bar\alpha_3\bar\sigma_2\sigma_2}
e_{\alpha_3}\circ e_{\bar\alpha_2}$}}
\smallbreak\qquad\qquad
$-R_{\alpha_1\bar\alpha_2\bar\sigma_1\sigma_1}R_{\alpha_2\bar\alpha_1\bar\sigma_2\sigma_2}
e_{\alpha_3}\circ e_{\bar\alpha_3}
-R_{\alpha_1\bar\alpha_3\bar\sigma_1\sigma_1}R_{\alpha_2\bar\alpha_2\bar\sigma_2\sigma_2}
e_{\alpha_3}\circ e_{\bar\alpha_1}$.
\medbreak\noindent
We have $\mathcal{Q}_{m,1}\in\KKF_{\mathfrak{Q},m,1}$,
$\mathcal{Q}_{m,2}^1\in\KKF_{\mathfrak{Q},m,2}$ and
$\mathcal{Q}_{m,2}^2\in\KKF_{\mathfrak{Q},m,2}$.
The invariant $\mathcal{Q}_{m,1}$ is generically non-zero in complex dimension
 $m\ge2$ but vanishes in complex dimension $m=1$;
the invariants $\mathcal{Q}_{m,2}^1$ and $\mathcal{Q}_{m,2}^2$ are generically non-zero in
complex dimension
$ m\ge3$ but vanish in complex dimension $ m=2$.
One sees this not by using the index notation but rather by expressing
\begin{eqnarray*}
&&\mathcal{Q}_{m,1}={\textstyle\frac12
R_{\alpha_1\bar\beta_1\bar\gamma_1\gamma_1}e_{\alpha_2}\circ
e_{\bar\beta_2}} g(dz^{\alpha_1}\wedge d \bar
z^{\beta_1}\wedge dz^{\alpha_2}\wedge d \bar z^{\beta_2},
\Omega^2),\\
&&\mathcal{Q}_{m,2}^1=\textstyle\frac16
R_{\alpha_1\bar\beta_1\bar\gamma_1\delta_1}R_{\alpha_2\bar\beta_2\bar\delta_1\gamma_1}
e_{\alpha_3}\circ e_{\bar\beta_3}g(e^{\alpha_1}\wedge\bar e^{\beta_1}\wedge e^{\alpha_2}
\wedge\bar  e^{\beta_2}\wedge e^{\alpha_3}\wedge \bar e^{\beta_3},\Omega^3),\\
&&\mathcal{Q}_{m,2}^2=\textstyle\frac16
R_{\alpha_1\bar\beta_1\bar\sigma_1\sigma_1}R_{\alpha_2\bar\beta_2\bar\sigma_2\sigma_2}
e_{\alpha_3}\circ e_{\bar\beta_3}g(e^{\alpha_1}\wedge\bar e^{\beta_1}\wedge e^{\alpha_2}
\wedge\bar  e^{\beta_2}\wedge e^{\alpha_3}\wedge \bar e^{\beta_3},\Omega^3)\,.
\end{eqnarray*}\end{example}

We generalize this construction:

\begin{definition}\label{D1.6}
\rm
Let $\xSSmk\in\xSSSFmk$. The {\it transgression} $\XiQmkT(\xSSmk)\in S_2^+$
is defined by setting:
$$
\XiQmkT(\xSSmk):={\textstyle\frac1{(k+1)!}}g(\xSSmk(\RR)\wedge e^\alpha\wedge\bar e^\beta,
\Omega^{k+1})e_\alpha\circ\bar e_\beta\,.
$$
\end{definition}

\begin{example}\label{E1.2}
\rm Adopt the notation of Example~\ref{E1.1}. Then
$\mathcal{Q}_{m,1}=\XiQmaT(\Tr_1)$.
Let $\rho$ be the Ricci tensor and let $\tau$ be the scalar curvature. We have
$$\mathcal{Q}_{m,1}=-\textstyle\frac12\tau g+\rho\,.$$
This symmetric 2-form valued tensor is generically non-zero if $m\ge2$ but vanishes identically
in complex dimension $m=1$; this is a classic identity. Recall that
$\Tr_k(R)=\Tr(R^k)$. Let
$\mathcal{Q}_{m,2}^1=\XiQmbT(\Tr_2)$ and
$\mathcal{Q}_{m,2}^2=\XiQmbT(\Tr_1^2)$. Let $\rho$ be the Ricci tensor. Set
\begin{eqnarray*}
\check{R}_{ij}=R_{abci}{R^{abc}}{}_{j},\quad
\check{\rho}_{ij}=\rho_{ai}\rho^{a}{}_j,\quad
     L_{ij}=2R_{iabj}\rho^{ab}\,.
\end{eqnarray*}
We then have:
$$\mathcal{Q}_{m,2}^1=\textstyle(\frac{1}{2}|\rho|^2-\frac{1}{4}|R|^2)g+(\check{R}-L(\rho))\text{ and }
\mathcal{Q}_{m,2}^2=2\check{\rho}-\tau\rho-\frac{1}{2}(|\rho|^2-\frac{\tau^2}{2})g\,.$$
The characteristic class $c_1^2$ corresponds to $\Tr_2$; the formula
for $\mathcal{Q}_{m,2}^2$ agrees with that given in Theorem~5.3 \cite{EPS13} for
the associated Euler-Lagrange equation.
Furthermore, the Euler class in real dimension $4$ corresponds to
$2\det(A)=\mathcal{Q}_{m,2}^2-\mathcal{Q}_{m,2}^1$. We express:
$$\mathcal{Q}_{m,2}^2-\mathcal{Q}_{m,2}^1= \textstyle\frac{1}{4}(|R|^2-|\rho|^2+\frac{\tau^2}{4})g-\check{R}+L(\rho)+2\check{\rho}-\tau\rho\,.
$$
This is the universal curvature identity discussed in \cite{M70,EPS10} that
is generated by the Euler-Lagrange equation of this characteristic class.
Note that the complex structure is not involved; this is no longer the case
when we consider invariants of order 6 and higher.\end{example}

The invariants of Definition~\ref{D1.6} yield the universal $ S_2^+$ valued curvature
identities that we have been searching for; every $ S_2^+$ valued invariant which is
homogeneous of degree $2k$ in the derivatives of the metric and which is generically
non-zero in complex dimension $m>k$ and which vanishes in complex dimension $k$ arises
in this fasion. Theorem~\ref{T1.1} generalizes to
this setting to become the following result which is the first major
new result of this paper:

\begin{theorem}\label{T1.2}
If $m>k$, then map $\XiQmkT$ of Definition~\ref{D1.6}
 is an isomorphism from $\xSSSFmk$ to $\KKKFQmkU$.
This means that a $S _2^+$ valued curvature identity of order $2 k$
which is given universally by contracting indices in pairs, which is
generically non-zero in complex dimension $m>k$, and which vanishes
in complex dimension $m=k$ is of this form.
\end{theorem}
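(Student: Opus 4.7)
The plan is to reduce Theorem~\ref{T1.2} to its scalar analogue Theorem~\ref{T1.1} via a metric trace identity. That $\XiQmkT(\xSS)$ lies in $\KKKFQmkU$ is immediate from the defining formula, which contains the factor $\Omega^{k+1}$; this $(2k{+}2)$-form vanishes identically on any K\"ahler manifold of complex dimension~$k$. Moreover, vanishing in dimension~$k$ implies vanishing in every smaller dimension (by naturality of the restriction map), and the trace commutes with restriction, so $\Tr_g$ sends $\KKKFQmkU$ into $\KKKFPmkU$.

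The key computation is the metric trace of $\XiQmkT(\xSS)$. In a unitary frame $\sum_\alpha e^\alpha\wedge\bar e^\alpha = \sqrt{-1}\,\Omega$, so contracting the free indices against $g$ produces a factor of $\Omega$. Combined with the K\"ahler identity $\Lambda\Omega^{k+1}=(k+1)(m-k)\Omega^k$ for the adjoint $\Lambda$ of the Lefschetz operator $L=\wedge\Omega$, this yields
\[
\Tr_g\XiQmkT(\xSS) \;=\; \frac{\sqrt{-1}(m-k)}{k!}\,g(\xSS(\RR),\Omega^k) \;=\; \sqrt{-1}(m-k)\,\XiPmk(\xSS).
\]
For $m>k$ the constant $\sqrt{-1}(m-k)$ is nonzero, so Theorem~\ref{T1.1} yields both injectivity of $\XiQmkT$ and surjectivity of $\Tr_g\colon\KKKFQmkU\to\KKKFPmkU$.

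For surjectivity of $\XiQmkT$ itself, the principal obstacle, it suffices to show $\Tr_g|_{\KKKFQmkU}$ is injective; equivalently, that no nonzero traceless $S_+^2$-valued $U(m)$-invariant of degree~$2k$ vanishes in complex dimension~$k$. My plan is first to establish the tensor-valued analogue of Remark~\ref{R1.1}, namely that the restriction map $r_{m,k+1}$ gives an isomorphism $\KKKFQmkU\cong\KKKFQKkU$ for all $m\geq k+1$. This reduces the problem to the minimal complex dimension $m=k+1$, where the Fukami--Iwahori generalization of Weyl's invariant theorem parametrizes the degree-$2k$ piece of $\QQQFU$ by explicit contractions of curvature with two distinguished symmetric free indices, and the vanishing-in-dim-$k$ constraint becomes a concrete linear system on the contraction coefficients.

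The main technical obstacle will be the ensuing dimension count: showing that this linear system cuts out a subspace of dimension exactly $\rho(k)=\dim\xSSSFKk$. Once this is established, the injection $\XiQKkT\colon\xSSSFKk\hookrightarrow\KKKFQKkU$ is forced to be an isomorphism by matching dimensions, and transferring back via the restriction isomorphism yields the theorem for general $m\ge k+1$. This combinatorial count is the tensor-valued analogue of the calculation underlying Theorem~\ref{T1.1} and represents the principal new content of Theorem~\ref{T1.2}.
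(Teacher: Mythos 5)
Your treatment of the easy half is correct and, for injectivity, genuinely different from the paper's. The paper shows $\XiQmkT$ is injective (Lemma~\ref{L3.2}(2c)) by evaluating on products $\NNk\times\mathcal{T}^1$ with a flat torus and reducing to the non-vanishing of characteristic numbers on $\mathbb{CP}^{\vec\nu}$; your route via the trace identity $\Tr_g\XiQmkT(\xSSmk)=c\,(m-k)\,\XiPmk(\xSSmk)$ (the constant of proportionality to $m-k$ is confirmed by the example $\Tr_g\mathcal{Q}_{m,1}=(1-m)\tau$) is a clean alternative that piggybacks on Theorem~\ref{T1.1}, and your observation that $\Tr_g$ maps $\KKKFQmkU$ into $\KKKFPmkU$ is also correct. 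If the rest of your program were carried out, this would be a legitimately different organization of the argument.

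However, there is a genuine gap: the entire content of surjectivity is deferred. You reduce it to showing that $\Tr_g$ is injective on $\KKKFQmkU$ (equivalently, that $\dim\{\KKKFQmkU\}\le\rho(k)$), and then describe this as ``the main technical obstacle'' and ``the principal new content'' without proving it. That estimate is exactly the paper's Lemma~\ref{L4.3}, and it rests on the long invariance-theoretic analysis of Section~\ref{S4}: Lemma~\ref{L3.3} (the ``change one index $\alpha\to\beta$'' consequence of $U(2)$-invariance, standing in for Weyl's second theorem, which is not available off the shelf in the unitary setting) and the delicate normal-form argument of Lemma~\ref{L4.2}, which forces every nonzero element of $\KKKFQmkU$ to contain a monomial $g(1,1;\bar\sigma(1),\bar\sigma(1))\cdots g(k,k;\bar\sigma(k),\bar\sigma(k))\partial_{z_{k+1}}\circ\partial_{\bar z_{k+1}}$ indexed by a conjugacy class of $\operatorname{Perm}(k)$, whence the bound $\rho(k)$. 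Even your preliminary reduction --- that $r_{m,k+1}$ is injective on $\KKKFQmkU$ --- already requires Lemma~\ref{L3.3}(4) and is not free. Note also that the Fukami--Iwahori theorem you invoke only gives \emph{generators} of $\QQQFU$ by contractions; without control of the \emph{relations} among them, the ``concrete linear system'' you propose is not well posed, which is precisely why the paper redoes the invariance theory from scratch. As written, the proposal establishes injectivity but not the isomorphism.
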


\subsection{Euler Lagrange equations}
Let $\MMm=(M,g,J)$ be a compact K\"ahler manifold.
Let $\xSSmk\in\xSSSFmk$ for $ k\le m$. Although $\xSSmk$
determines a cohomology class, it does not determine
a corresponding scalar invariant if $ k< m$.
We integrate the invariant of Definition~\ref{D1.4} to define:
\begin{equation}\label{E1.h}
\{\XiPmk({{\xSSmk}})\}[\MMm]:=\frac1{ k!}\int_Mg(\xSSmk(\RR_\MM),\Omega_g^{ k})d\nu_g\,.
\end{equation}
If $ k= m$, we use Equation~(\ref{E1.g}) to see
$$\{\XiPkk({{\xSSkk}})\}[\MMm]=\int_M{{\xSSkk}}(\mathcal{R}_\MM)$$
is a characteristic number that is independent of the metric $g$. However,
more generally, if $m>k$, then this integral depends upon the metric.
Let
$g_\varepsilon:=g+\varepsilon h$ be a smooth 1-parameter family of K\"ahler metrics;
such families may be obtained using the K\"ahler potential
as we shall discuss presently in Section~\ref{S2.3}.
We integrate by parts to
obtain the corresponding {\it Euler-Lagrange} formula.
\begin{definition}\label{D1.7}
\rm
Let $\xSSmk\in\xSSSFmk$. Let $\MMm=(M,g,J)$ be a K\"ahler manifold of complex dimension $m$. Let
$\MMm_\varepsilon:=(M,g+\varepsilon h,J)$ be a K\"ahler variation. Let
$\TQmk\{\xSSmk\}\in S^+_2M$
be the associated {\it Euler-Lagrange invariant}; it is uniquely characterized by the identity:
$$\partial_{\varepsilon}\left.\left\{\XiPmk({{\xSSmk}})[\MMm_\varepsilon]\right\}\right|_{\varepsilon=0}
=\int_M\left\langle\vphantom{\vrule height 11pt}
\left\{\TQmk(\xSSmk)\right\}(\RR_\MM),h\right\rangle d\nu_g\,.
$$
\end{definition}

What is perhaps somewhat surprising is that the Euler-Lagrange formulas
for $\xSSmk$ are closely related
to the universal curvature identities defined by the transgression.
We adopt the notation of Example~\ref{E1.1} and Example~\ref{E1.2}.
It is well known that $\mathcal{Q}_{m,1}$ is the Euler-Lagrange equation for
the Gauss-Bonnet integrand.
Work of \cite{EPS13}
shows that universal curvature identity
$\mathcal{Q}_{m,2}^1$ is the Euler-Lagrange equation for $\Tr_2$.
Similarly, work of \cite{M70,EPS10} shows that
 the universal curvature identity $\mathcal{Q}_{m,2}^2-\mathcal{Q}_{m,2}^1$
is the Euler-Lagrange equation of the Euler class. Thus $\XiQmkT=\TQmk$ if $k=1,2$.
This is true more generally; the map from the characteristic forms to the symmetric 2-tensors
given by the Euler-Lagrange equations coincides with the map given algebraically by the transgression
in the K\"ahler setting. Let $\TQmk$ be as given
in Definition~\ref{D1.7} and let $\XiQmkT$ be as given in Definition~\ref{D1.6}.
The following is the second main result of this paper:

\begin{theorem}\label{T1.3}
If $m>k$, then
$\TQmk=\XiQmkT$. This means that if $\xSSmk\in\xSSSFmk$, if $m>k$, and if
$\MMm_\varepsilon:=(M,g+\varepsilon h,J)$ is a K\"ahler variation, then
\begin{eqnarray*}
&&\partial_{\varepsilon}\left.\left\{\XiPmk({{\xSSmk}})[\MMm_\varepsilon]\right\}\right|_{\varepsilon=0}\\
&=&\frac1{(k+1)!}
\int_Mg(\xSSmk(\RR)\wedge e^{\alpha}\wedge\bar e^\beta,
\Omega^{k+1})\langle e_\alpha\circ\bar e_\beta,h\rangle
d\nu_\MM\,.
\end{eqnarray*}
\end{theorem}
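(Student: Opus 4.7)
My plan is to compute $\partial_\varepsilon|_0\{\XiPmk(\xSSmk)\}[\MMm_\varepsilon]$ directly and recognize the result as $\int_M\langle\XiQmkT(\xSSmk),h\rangle\,d\nu_g$; the central mechanism is the exactness of the variation of a Chern--Weil form combined with the Kähler condition $d\Omega=0$.

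First, using the standard Kähler identity $g(\omega,\Omega^k)\,d\nu_g=\frac{k!}{(m-k)!}\,\omega\wedge\Omega^{m-k}$ for $(k,k)$-forms $\omega$ (which follows from $*\Omega^k=\frac{k!}{(m-k)!}\Omega^{m-k}$), applied to $\omega=\xSSmk(\RR)$, I would rewrite the functional as a cohomological wedge integral:
\[
\{\XiPmk(\xSSmk)\}[\MMm]=\frac{1}{(m-k)!}\int_M\xSSmk(\RR)\wedge\Omega^{m-k}.
\]
This reformulation makes the variation tractable. Along a Kähler variation $g_\varepsilon=g+\varepsilon h$ the complex structure $J$ is fixed, so the induced Chern connections on $(TM,J)$ form a smooth family, and Chern--Weil theory yields $\partial_\varepsilon|_0\xSSmk(\RR_{g_\varepsilon})=d\eta$ for some form $\eta$. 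Since $d\Omega=0$, the term $d\eta\wedge\Omega^{m-k}=d(\eta\wedge\Omega^{m-k})$ is exact and integrates to zero by Stokes' theorem. Therefore only the variation of $\Omega^{m-k}$ contributes, giving
\[
\partial_\varepsilon|_0\{\XiPmk(\xSSmk)\}[\MMm_\varepsilon]=\frac{1}{(m-k-1)!}\int_M\xSSmk(\RR)\wedge\Omega^{m-k-1}\wedge\Omega_h,
\]
where $\Omega_h(X,Y):=h(X,JY)$ is the infinitesimal variation of the Kähler form.

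Next I would identify the right-hand side with $\int_M\langle\XiQmkT(\xSSmk),h\rangle\,d\nu_g$. In a local unitary frame, $\Omega_h$ expands as a sum $\sum_{\alpha,\beta}\langle e_\alpha\circ\bar e_\beta,h\rangle\,e^\alpha\wedge\bar e^\beta$ (the pairing extracts the Hermitian components of $h$, with the $\sqrt{-1}$ factors from $\Omega$ and $J$ combining so the result is real). Substituting this expansion and applying the Kähler pairing identity a second time, now with $k+1$ in place of $k$ on the $(k+1,k+1)$-form $\xSSmk(\RR)\wedge e^\alpha\wedge\bar e^\beta$, converts the wedge integral back into a $g(\cdot,\Omega^{k+1})$ pairing. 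After collecting constants, the result is precisely
\[
\frac{1}{(k+1)!}\int_M g\bigl(\xSSmk(\RR)\wedge e^\alpha\wedge\bar e^\beta,\Omega^{k+1}\bigr)\,\langle e_\alpha\circ\bar e_\beta,h\rangle\,d\nu_g=\int_M\langle\XiQmkT(\xSSmk),h\rangle\,d\nu_g
\]
by Definition~\ref{D1.6}, and the identification $\TQmk(\xSSmk)=\XiQmkT(\xSSmk)$ follows from Definition~\ref{D1.7}.

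The main obstacle is the combinatorial bookkeeping: tracking the factorials and $\sqrt{-1}$-factors through the two applications of the Kähler pairing identity and through the unitary-frame expansion of $\Omega_h$. A secondary subtlety is verifying that only the Hermitian (i.e.\ $J$-invariant) component of $h$ contributes; this is forced by the Kähler condition on the variation and is consistent with the ambient setting $S_+^2M$, since the anti-Hermitian piece of $h$ neither arises from a Kähler variation nor pairs nontrivially with $\Omega_h$. As a sanity check, this mechanism is self-consistent with Theorem~\ref{T1.2}: when $m=k$ the functional becomes a characteristic number and its variation vanishes, so $\TQmk(\xSSmk)$ lands in $\KKKFQmkU$, exactly the image of $\XiQmkT$.
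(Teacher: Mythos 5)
Your proposal is correct in outline but takes a genuinely different route from the paper's. The paper never performs the variational computation at all: it shows (Lemma~\ref{L3.2x}) that $\TQmk$ takes values in $\KKKFQmkU$ and commutes with restriction, reduces to $m=k+1$, invokes Theorem~\ref{T1.2} to write $\TQKk\xSSKk=\XiQKkT\tilde{\xSS}_{k+1,k}$ for some $\tilde{\xSS}_{k+1,k}\in\xSSSFKk$, and then pins down $\tilde{\xSS}_{k+1,k}=\xSSKk$ by testing both sides against the single variation $g_\varepsilon=g_{\NN}+(1+\varepsilon)dw\circ d\bar w$ on a product $\NNk\times\mathcal{T}^1$ and appealing to Lemma~\ref{L1.1}. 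You instead compute the variation directly: rewriting $\XiPmk(\xSSmk)[\MM]$ as $\frac1{(m-k)!}\int_M\xSSmk(\RR)\wedge\Omega^{m-k}$ via $*\frac{\Omega^k}{k!}=\frac{\Omega^{m-k}}{(m-k)!}$, discarding the Chern--Weil transgression term $d\eta\wedge\Omega^{m-k}$ by Stokes (valid because $J$ is fixed, so the Levi-Civita connections form a smooth family of complex connections, and $d\Omega=0$), and identifying the surviving term $\frac1{(m-k-1)!}\int_M\xSSmk(\RR)\wedge\Omega^{m-k-1}\wedge\Omega_h$ with the transgression pairing. This buys two things the paper's proof does not: independence from Theorem~\ref{T1.2} (hence from the invariance theory of Section~4), and a transparent explanation of Remark~\ref{R1.2}, since your final expression visibly contains no covariant derivatives of curvature. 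What it costs is exactly the bookkeeping you flag: you must verify that $\sum_{\alpha,\beta}\langle e_\alpha\circ\bar e_\beta,h\rangle\,e^\alpha\wedge\bar e^\beta$ equals $\Omega_h$ on the nose under the paper's conventions (where $\Omega=-\sqrt{-1}g_{\alpha\bar\beta}dz^\alpha\wedge d\bar z^\beta$, so a factor of $-\sqrt{-1}$ and the normalizations of $\circ$ and $\langle\cdot,\cdot\rangle$ must be absorbed); the $k=1$ identity $\XiQmaT(\Tr_1)=-\frac12\tau g+\rho$ of Example~\ref{E1.2} gives a concrete check that your overall constant is $1$. The paper's indirect route is precisely a device for never having to audit these constants.
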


\begin{remark}\label{R1.2}
\rm A-priori, since the local invariant $\xSSmk$ involves
$2^{\operatorname{nd}}$ derivatives, the associated Euler-Lagrange invariant
could involve the first and second covariant derivatives
of the curvature tensor. The somewhat surprising fact is that
this is not the case as Theorem~\ref{T1.3} shows.
In the real setting,
one can work with the {\it Pfaffian}; this is the integrand of the
Chern-Gauss-Bonnet formula \cite{C44}.
Berger \cite{M70} conjectured that the corresponding Euler-Lagrange invariant
only involved the second derivatives of the
metric. This was established
by Kuz'mina \cite{K74} and Labbi \cite{L05,L07,L08} (see also the discussion in \cite{GPS11}).
Theorem~\ref{T1.3} is the extension to the complex setting of this result.
\end{remark}

\subsection{Outline of the paper}
Fix a point of a K\"ahler manifold $\MMm$. In Section~\ref{sect-2}, we normalize the choice
of the coordinate system to be the unitary group up to arbitrarily high order. In Section~\ref{S3},
we give an algebraic description of the space $\KKKFPmkU$ (resp. $\KKKFQmkU$) from the
point of the restriction map from complex dimension $m$ to complex dimension $k-1$ (resp. $k$)
and show that $\XiPmk$ (resp. $\XiQmkT$ and $\TQmk$) takes values in $\KKKFPmkU$
(resp. $\KKKFQmkU$).
In Section~\ref{S4} we discuss invariance theory. We take a slightly non-standard point of view.
Weyl's first theorem of invariants \cite{W46} gives generators
for the space of invariants of the orthogonal group;
in brief, this generating set can be described in terms of contractions of indices.
Fukami \cite{Fu58} and Iwahori \cite{Iw58} have extended this result to the complex setting;
the generating set is formed by using both the metric and the K\"ahler form to contract indices.
However, what is needed in our analysis is Weyl's second theorem
of invariants which describes the relations among
the generating set described above. This analysis does not seem
to have been extended to the complex setting. Even were this to have been done,
we would still need to use the K\"ahler identity suitably. For that reason,
 it seemed easiest simply to do the necessary invariance theory
from scratch in a non-standard setting and we apologize in advance
if this is unfamiliar. Let $\KKKFQmkU$ be as given in Definition~\ref{D1.5}
and let $\rho(k)$ be the partition function of Definition~\ref{D1.1}.
The crucial estimate in this regard is given in Lemma~\ref{L4.3}:
$$\dim\{\KKKFQmkU\}\le\rho( k)\,.$$
In Section~\ref{S5},
we use these results of Section~\ref{S3} to establish Theorem~\ref{T1.1},
Theorem~\ref{T1.2}, and Theorem~\ref{T1.3}.

\section{Normalizing the coordinates}\label{sect-2}
In this section, we probe in a bit more detail into K\"ahler geometry. In Section~\ref{S2.1},
we introduce some basic notational conventions. In Section~\ref{S2.2}, we reduce the
structure group to the unitary group modulo a holomorphic transformation of arbitrarily high
order. In Section~\ref{S2.3}, we discuss K\"ahler potentials; this provides a way of varying
the original K\"ahler metric that will be very useful in considering the Euler-Lagrange equations.
In Section~\ref{S2.4}, we will use the K\"ahler potential to specify the jets of the metric; we shall
work with a polynomial algebra in the derivatives of the metric and in this section, we show
there are no hidden relations or analogues of the Bianchi identities. This will be crucial in our
subsequent discussion in Section~\ref{S3}.
\subsection{Notational conventions}\label{S2.1}
Let $P$ be a point of a K\"ahler manifold $\MMm$. Extend the
$J$-invariant Riemannian metric $g$ to be a symmetric
complex bilinear form. Let
$$g_{\alpha\beta}:=g(\partial_{z_\alpha},\partial_{z_\beta}),\quad
    g_{\bar\alpha\bar\beta}:=g(\partial_{\bar z_{\alpha}},\partial_{\bar z_{\beta}}),\quad
    g_{\alpha\bar\beta}:=g(\partial_{z_{\alpha}},\partial_{\bar z_{\beta}}).
$$
Since $g$ is $J$-invariant, we may show that $g_{\alpha\beta}=g_{\bar\alpha\bar\beta}=0$ by computing:
\begin{eqnarray*}
&&g_{\alpha\beta}=g(J\partial_{z_\alpha},J\partial_{z_\beta})=
g(\sqrt{-1}\partial_{z_\alpha},\sqrt{-1}\partial_{z_\beta})=-g_{\alpha\beta},\\
&&g_{\bar\alpha\bar\beta}=g(J\partial_{\bar z_{\alpha}},J\partial_{\bar z_{\beta}})=
g(-\sqrt{-1}\partial_{\bar z_{\alpha}},-\sqrt{-1}\partial_{\bar z_{\beta}})=-g_{\bar\alpha\bar\beta}
\end{eqnarray*}
As a result, we have that:
\begin{eqnarray*}
&&\Omega(\partial_{z_\alpha},\partial_{z_\beta})=g(\partial_{z_\alpha},J\partial_{z_\beta})
=\sqrt{-1}g_{\alpha\beta}=0,\\
&&\Omega(\partial_{\bar z_\alpha},\partial_{\bar z_\beta})
=g(\partial_{\bar z_\alpha},J\partial_{\bar z_\beta})
=-\sqrt{-1}g_{\bar\alpha\bar\beta}=0,\\
&&\Omega(\partial_{z_\alpha},\partial_{\bar z_\beta})=
g(\partial_{z_\alpha},J\partial_{\bar z_\beta})=
-\sqrt{-1}g_{\alpha\bar\beta},\\
&&\Omega=-{\sqrt{-1}}g_{\alpha\bar\beta}dz^\alpha\wedge d\bar z^\beta\,.
\end{eqnarray*}
The equation $d\Omega=0$ is then equivalent to the relations
\begin{equation}\label{eqn-2.a}
\begin{array}{l}
0=\partial_{z_\gamma}g_{\alpha\bar\beta}dz^\gamma\wedge dz^\alpha\wedge d\bar z^\beta
-\partial_{\bar z_{\gamma}}g_{\alpha\bar\beta}dz^\alpha\wedge
d\bar z^{\gamma}\wedge d\bar z^\beta,\quad\text{i.e.}\\
\partial_{z_\gamma}g_{\alpha\bar\beta}=\partial_{z_\alpha}g_{\gamma\bar\beta}\text{ and }
\partial_{\bar z_{\gamma}}g_{\alpha\bar\beta}=\partial_{\bar z_{\beta}}g_{\alpha\bar\gamma}\,.
\vphantom{\vrule height 11pt}\end{array}\end{equation}

Let $\delta$ be the Kronecker symbol.
Let $A:=(\alpha_1,\dots,\alpha_\nu)$ be
an ordered collection of indices $\alpha_i$ where $1\le\alpha_i\le m$. Let
$|A|=\nu$, let $z^A=z^{\alpha_1}\dots z^{\alpha_\nu}$, and let
$$\deg_\alpha(A):=\delta_{\alpha\alpha_1}+\dots+\delta_{\alpha\alpha_\nu}
$$
be the number of times the index $\alpha$ appears in $A$.
Let $B=(\beta_1,\dots,\beta_\mu)$ be another collection of indices and
let $\vec z=(z^1,\dots,z^{ m})$
be a local holomorphic system of coordinates on a K\"ahler manifold $\MMm$. Set
$$g^{\vec z}(A;B):=\left\{\partial_{z_{\alpha_2}}\dots
\partial_{z_{\alpha_\nu}}\partial_{\bar z_{\beta_2}}\dots
\partial_{\bar z_{\beta_\mu}}\right\} g_{\alpha_1\bar\beta_1}\,.$$
We shall often omit the superscript $\vec z$ if there is only one coordinate system under consideration.
If $\sigma$ and $\tau$ are permutations, let
$$
A^\sigma:=(\alpha_{\sigma(1)},\dots,\alpha_{\sigma(\nu)})\text{ and }
B^\tau:=(\beta_{\tau(1)},\dots,\beta_{\tau(\mu)})\,.
$$
Equation~(\ref{eqn-2.a}) may then be differentiated to see:
$$g(A;B)=g(A^\sigma;B^\tau)$$
so the variables $g(A;B)$ are symmetric in the holomorphic indices
and also in the anti-holomorphic indices;
the order of the indices comprising $A$ and comprising $B$ plays no role. Note that
$$g^{\vec z}(B;A)=\bar g^{\vec z}(A;B)\,.$$

\subsection{Reducing the structure group to $U( m)$}\label{S2.2}
The following result will enable us to normalize
the structure group of admissible coordinate transformations
from the full group of holomorphic transformations
to the unitary group
modulo changes which vanish to arbitrarily high order at a given point $P$ of $M$:

\begin{lemma}\label{lem-2.1}
 Let $P$ be a point of a K\"ahler manifold $\MMm$. Fix $n$.
\begin{enumerate}
\item There exist
local holomorphic coordinates $(z^1,\dots,z^{ m})$
 centered at $P$ so that
 \begin{equation}\label{eqn-2.b}
 \phantom{gronk}g_{\alpha\bar\beta}(P)=\delta_{\alpha\beta}\text{ and }
 g^{\vec z}(A;B)(P)=0\text{ for }|B|=1\text{ and }2\le |A|\le n\,.\end{equation}
\item If $(w^1,\dots,w^{ m})$ is another system of local holomorphic coordinates on $M$
which are centered at $P$ and which satisfy the relations of Equation~(\ref{eqn-2.b}), then
$z=Tw+O(|w|^{n+1})$ for some linear map $T\in U( m)$.
\end{enumerate}
\end{lemma}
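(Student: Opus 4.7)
The argument is by induction on $n$. For the base case $n=1$, the matrix $(g_{\alpha\bar\beta}(P))$ is positive definite Hermitian, so starting from any local holomorphic coordinates centered at $P$, a complex-linear change of coordinates brings it to $\delta_{\alpha\beta}$.

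For the inductive step, suppose coordinates $(w^1,\dots,w^m)$ centered at $P$ have already been arranged so that Equation~(\ref{eqn-2.b}) holds up through order $n-1$. The plan is to compose with a holomorphic change of the form
$$z^\alpha = w^\alpha - \tfrac{1}{n!}\,c^\alpha_{\gamma_1\cdots\gamma_n}\,w^{\gamma_1}\cdots w^{\gamma_n} + O(|w|^{n+1}),$$
where $c^\alpha_{\gamma_1\cdots\gamma_n}$ is symmetric in $\gamma_1,\dots,\gamma_n$, and to solve for the $c$'s so that $g^z(A;\beta)(P)=0$ for every $|A|=n$. Because this change is the identity to order $n-1$, every previously achieved normalization is preserved. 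Expanding the transformation law
$$g^w_{\alpha\bar\beta}(w) = (\partial_\alpha z^\mu)\,\overline{\partial_\beta z^\nu}\,g^z_{\mu\bar\nu}(z(w))$$
by Leibniz and evaluating $\partial_{w_{\alpha_2}}\cdots\partial_{w_{\alpha_n}}$ at $w=0$, all contributions vanish except two, because the antiholomorphic factor depends only on $\bar w$ (so holomorphic derivatives annihilate it) and because the function $\psi := w - z$ vanishes to order $n$ (so any positive number of derivatives of $\psi$, fewer than $n$, evaluates to zero at $P$). The two surviving terms yield the clean identity
$$g^w(A;\beta)(P) - g^z(A;\beta)(P) = -c^\beta_A \quad\text{for }|A|=n,$$
so the prescription $c^\beta_A := g^w(A;\beta)(P)$ achieves the desired vanishing. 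This prescription is consistent with the required symmetry of $c^\beta_{\gamma_1\cdots\gamma_n}$ in its lower indices because the K\"ahler condition, in the form of Equation~(\ref{eqn-2.a}), forces $g^w(A;\beta)(P)$ to be fully symmetric in the $n$ holomorphic indices comprising $A$.

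For part~(2), let $z=\phi(w)$ be the transition biholomorphism between the two normalized systems, and set $T:=d\phi(0)$. Comparing $g^w_{\alpha\bar\beta}(P)=g^z_{\alpha\bar\beta}(P)=\delta_{\alpha\beta}$ via the transformation law at $P$ gives $T^*T=I$, so $T\in U(m)$. Replacing $\phi$ by $T^{-1}\circ\phi$, we may assume $T=I$, so that $\phi(w)=w+O(|w|^2)$. We then show by induction on $p=2,3,\dots,n$ that $\phi(w)=w+O(|w|^{p+1})$. Granted $\phi(w)=w+O(|w|^p)$, write its degree-$p$ piece as $\tfrac{1}{p!}c^\alpha_{\gamma_1\cdots\gamma_p}w^{\gamma_1}\cdots w^{\gamma_p}$; applying the identity of the previous paragraph (with $n$ replaced by $p$, noting $\psi = w - z = -\chi$ is minus this piece) gives $g^w(A;\beta)(P)-g^z(A;\beta)(P)= c^\beta_A$ for $|A|=p$. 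Both sides vanish by the normalization of each system (since $p\le n$), so $c^\beta_A=0$ and the degree-$p$ piece is trivial. Iterating yields $\phi(w)=w+O(|w|^{n+1})$.

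The only truly nontrivial point in the entire argument is the compatibility between the symmetry of $c^\alpha_{\gamma_1\cdots\gamma_n}$ in its $\gamma$-indices, which is forced by its origin as a Taylor coefficient of a holomorphic polynomial, and the symmetry of the values $g^w(A;\beta)(P)$ that we must match. This compatibility is exactly the content of the K\"ahler relations in Equation~(\ref{eqn-2.a}); without it, there would be no solvable linear system for $c$. All other steps are routine Leibniz-and-chain-rule bookkeeping whose lower-order contributions collapse to zero thanks to the high vanishing order of $\psi$.
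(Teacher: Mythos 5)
Your proof is correct in substance and follows essentially the same route as the paper's: normalize order by order via a degree-$n$ holomorphic polynomial change of coordinates, with solvability of the resulting linear system guaranteed by the symmetry of $g(A;\beta)$ in $A$ coming from the K\"ahler relations of Equation~(\ref{eqn-2.a}); your part~(2) is just a more detailed version of the uniqueness statement the paper asserts in one line. One bookkeeping slip: given your stated identity $g^w(A;\beta)(P)-g^z(A;\beta)(P)=-c^\beta_A$, the prescription must be $c^\beta_A:=-g^w(A;\beta)(P)$ (as written, $c^\beta_A:=g^w(A;\beta)(P)$ would yield $g^z(A;\beta)(P)=2g^w(A;\beta)(P)$), but this sign does not affect the validity of the argument.
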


\begin{proof} Suppose that $n=1$. We use the Gram-Schmidt process to make a
complex linear change of coordinates to ensure
that $g_{\alpha\bar\beta}(P)=\delta_{\alpha\beta}$. Assertion~(1) now follows;
Assertion~(2) is then immediate.
We therefore proceed by induction and assume that $n\ge2$.
Let $z$ be a system of coordinates normalized satisfying
$g_{\alpha\bar\beta}(P)=\delta_{\alpha\beta}$ and $g(A;B)=0$ for $|B|=1$ and
$2\le|A|<n$
(this condition is vacuous if $n=2$).
Consider the coordinate transformation:
$$w^\beta=z^\beta+\sum_{|A|=n}c_A^\beta z^A$$
where the constants $c_A^\beta$ are to be chosen suitably.  Set
\begin{equation}\label{eqn-2.c}
\varepsilon(A):=\partial_{z_{\alpha_1}}\dots\partial_{z_{\alpha_\nu}}\{z^A\}\in\mathbb{N}\,.
\end{equation}
We sum over repeated indices to compute:
\begin{eqnarray*}
&&\partial_{z_\alpha}=\partial_{w_\alpha}
+c_A^\gamma\partial_{z_\alpha}\{z^A\}\partial_{w_\gamma},\quad
\partial_{\bar z_{\beta}}=\partial_{\bar w_\beta}
+\bar c_A^\gamma\partial_{\bar z_\beta}\{\bar z^A\}\partial_{\bar w_\gamma},\\
&&g(\partial_{z_\alpha},\partial_{\bar z_{\beta}})=g(\partial_{w_\alpha},\partial_{\bar w_\beta})
+c_A^\beta\partial_{z_\alpha}\{z^A\}
+\bar c_A^\alpha\partial_{\bar z_\beta}\{\bar z^A\}+O(|z|^{n}),\\
&&g^{\vec z}(A,\beta)(P)=g^{\vec w}(A,\beta)(P)+\varepsilon(A)\cdot c_A^\beta\,.
\end{eqnarray*}
To ensure that $g^{\vec w}(A,\beta)(P)=0$ for all $A,\beta$, we solve the equations:
$$\varepsilon(A)c_A^\beta=g^{\vec z}(A,\beta)(P)\,.$$
Assertion~(2) now follows since the transformation is uniquely
defined if we suppose $dT(P)=\operatorname{id}$.
\end{proof}

We use Lemma~\ref{lem-2.1} to normalize the system of holomorphic coordinates $\vec z$
to arbitrarily high order henceforth; note that we also have:
$$g^{\vec z}(B;A)(P)=\bar g^{\vec z}(A;B)(P)=0\text{ for }|B|=1\,.$$
The structure group is now the unitary group $U(m)$ and the variables
$g^{\vec z}(A;B)$ are tensors; we shall suppress the role of the coordinate system
$\vec z$ whenever
no confusion is likely to result.
If we fix $|A|=n_1\ge2$ and $|B|=n_2\ge2$, then $g(\cdot;\cdot)$
is a symmetric cotensor of type $(n_1,n_2)$, i.e.
$$g(\cdot;\cdot)\in S^{n_1}(\Lambda^{1,0})
\otimes S^{n_2}(\Lambda^{0,1})\,.$$

The K\"ahler identity of Equation~(\ref{E1.b}) yields
$\RR(\partial_{z_a},\partial_{z_b})=\RR(\partial_{\bar z_a},\partial_{\bar z_b})=0$.
Let $A=(\alpha_1,\alpha_2)$ and $B=(\beta_1,\beta_2)$. We compute that:
\begin{eqnarray*}
&&R(\partial_{z_{\alpha_1}},\partial_{\bar z_{\beta_1}},
\partial_{\bar z_{\beta_2}},\partial_{z_{\alpha_2}})(P)\\
&=&\textstyle\frac12\{\partial_{z_{\alpha_1}}\partial_{\bar z_{\beta_2}}
g(\partial_{z_{\alpha_2}},\partial_{\bar z_{\beta_1}})
+\partial_{z_{\alpha_2}}\partial_{\bar z_{\beta_1}}
g(\partial_{z_{\alpha_1}},\partial_{\bar z_{\beta_2}})\}(P)\\
&=&g(A;B)(P)\,.
\end{eqnarray*}
A similar computation shows for $A=(\alpha_1,\alpha_2,\alpha_3)$ and
$B=(\beta_1,\beta_2)$ that:
\begin{eqnarray*}
&&\nabla R(\partial_{z_{\alpha_1}},\partial_{\bar z_{\beta_1}},
\partial_{\bar z_{\beta_2}},\partial_{z_{\alpha_2}};
\partial_{z_{\alpha_3}})(P)=g(A;B)(P)\,.
\end{eqnarray*}
The expression of the variables $g(A;B)(P)$ in terms of covariant derivatives of curvature
(and vice-versa) for larger values of $|A|$ and $|B|$ is more complicated.

\subsection{The K\"ahler potential}\label{S2.3}
Let
\begin{eqnarray*}
&&dz^I:=dz^{i_1}\wedge\dots\wedge dz^{i_p}\text{ for }I=\{1\le i_1<\dots<i_p\le m\},\\
&&d\bar z^J:=d\bar z^{j_1}\wedge \dots \wedge d\bar z^{j_q}\text{ for }J=\{1\le j_1<\dots<j_q\le m\}\,.
\end{eqnarray*}
We set $\Lambda^{p,q}M:=
\operatorname{Span}_{\mathbb{C}}\{dz^I\wedge d\bar z^J\}_{|I|=p,|J|=q}$ and decompose
$$\Lambda^nM\otimes_{\mathbb{R}}\mathbb{C}=\oplus_{p+q=n}\Lambda^{p,q}M\,.$$
Thus, for example, $\Lambda_+^2M\otimes_{\mathbb{R}}\mathbb{C}=\Lambda^{1,1}M$.
Decompose $d=\partial+\bar\partial$ where
$$\partial: C^\infty(\Lambda^{p,q}M)
\rightarrow C^\infty(\Lambda^{p+1,q}M)\text{ and }
\bar\partial:C^\infty(\Lambda^{p,q}M)\rightarrow C^\infty(\Lambda^{p,q+1}M)
$$
are defined by setting:
\begin{eqnarray*}
&&\partial(f_{I,J}dz^I\wedge d\bar z^J)=
\partial_{z_\alpha}(f_{I,J})dz^\alpha\wedge dz^I
\wedge d\bar z^J,\\
&&\bar\partial(f_{I,J}dz^I\wedge d\bar z^J):=
\partial_{\bar z_\alpha}(f_{I,J})d\bar z^\alpha
\wedge dz^I\wedge d\bar z^J\,.
\end{eqnarray*}
If $f\in C^\infty(M)$, define a real Hermitian symmetric bilinear form $h_f\in C^\infty(S_+^2)$
and a corresponding real anti-symmetric 2-form $\Omega_{h_f}\in C^\infty(\Lambda_+^2)$ by setting:
$$
\Omega_{h_f}=-\sqrt{-1}\partial\bar\partial f=-\sqrt{-1}\frac{\partial^2f}
{\partial_{z_\alpha}\partial_{\bar z_\beta}}dz^\alpha\wedge d\bar z^\beta\text{ and }
h_f=\frac{\partial^2f}{\partial_{z_\alpha}\partial_{\bar z_\beta}}dz^\alpha\circ d\bar z^\beta\,.
$$
We then have $d\Omega_{h_f}=0$ and, consequently, for small $\varepsilon$, $g+\varepsilon h_f$
is positive definite and thus a K\"ahler metric.

\subsection{Specifying the jets of the metric at $P$}\label{S2.4}
The variables $\{g(A;B)\}$ are a good choice of variables since, unlike the covariant derivatives of
the curvature tensor, there are no additional identities as the following result shows;
we are dealing with a pure polynomial algebra and we have avoided the Bianchi identities:

\begin{lemma}\label{lem-2.2}
Fix $n\ge2$. Let constants $c(A;B)\in\mathbb{C}$ be given
for $2\le|A|\le n$ and $2\le|B|\le n$ so that $c(A;B)=\bar c(B;A)$.
Let $P$ be a point of a K\"ahler manifold $(M,g_0,J)$. Use Lemma~\ref{lem-2.1}
to normalize the coordinate system $\vec z$ at $P$ so $g_0$
satisfies Equation~(\ref{eqn-2.b}). Then exists a K\"ahler metric $g$
on $(M,J)$ so that $g^{\vec z}$ also satisfies Equation~(\ref{eqn-2.b})
and so that $g^{\vec z}(A;B)(P)=c(A;B)$
for $2\le |A|\le n$ and $2\le |B|\le n$.
\end{lemma}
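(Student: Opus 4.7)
The strategy is to perturb $g_0$ by a K\"ahler potential localized near $P$. For any smooth real-valued function $f$ on $M$, the $2$-form $-\sqrt{-1}\,\partial\bar\partial f$ is exact and hence closed, so $g:=g_0+h_f$ is automatically K\"ahler as soon as it is positive definite. I shall take $f=\phi\cdot\tilde f$, where $\tilde f$ is a polynomial in $(\vec z,\bar{\vec z})$ defined on the coordinate chart at $P$ and $\phi$ is a smooth cutoff equal to $1$ near $P$ and compactly supported in a small coordinate ball about $P$, so that $g=g_0$ away from that ball.

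I choose
$$\tilde f(\vec z,\bar{\vec z})=\sum_{2\le|A|,|B|\le n}d(A;B)\,z^A\bar z^B,$$
where the coefficient $d(A;B)$ depends only on the unordered multisets of holomorphic and anti-holomorphic indices. Since every monomial of $\tilde f$ has holomorphic degree at least $2$ \emph{and} anti-holomorphic degree at least $2$, differentiating up to $n$ times in the holomorphic directions while only once in the anti-holomorphic direction (or vice versa) always leaves at least one positive power of $\bar z$ (resp.\ $z$), which vanishes at $P=\{\vec z=0\}$. This shows that the jets $h_f(A;B)(P)$ with $|A|\le 1$ or $|B|\le 1$ are zero, so adding $h_f$ to $g_0$ preserves Equation~(\ref{eqn-2.b}). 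For each pair $(A,B)$ with $2\le|A|,|B|\le n$, the mixed derivative $\partial_{z_{\alpha_1}}\cdots\partial_{z_{\alpha_{|A|}}}\partial_{\bar z_{\beta_1}}\cdots\partial_{\bar z_{\beta_{|B|}}}(z^{A'}\bar z^{B'})\big|_P$ vanishes unless $A'$ is a permutation of $A$ and $B'$ is a permutation of $B$, in which case it equals a positive combinatorial constant $\kappa(A)\kappa(B)$. Consequently the linear map $d(A;B)\mapsto h_f(A;B)(P)$ is diagonal (and hence invertible) onto the space of tensors with the appropriate symmetries, and we solve uniquely for $d(A;B)$ so that $h_f(A;B)(P)=c(A;B)-g_0(A;B)(P)$. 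The Hermitian reality conditions $c(A;B)=\overline{c(B;A)}$ and $g_0(A;B)=\overline{g_0(B;A)}$ then force $d(A;B)=\overline{d(B;A)}$, so $\tilde f$ is real-valued and $h_f$ is a real Hermitian symmetric $2$-tensor.

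It remains to verify positive definiteness globally. Since every term of $\tilde f$ has total degree at least $4$, on a ball of radius $r$ about $P$ the function $\tilde f$ together with its first and second derivatives is $O(r^4)$, $O(r^3)$, $O(r^2)$ respectively; taking the cutoff $\phi$ to be supported in such a ball with the usual estimates $|\partial^k\phi|=O(r^{-k})$ gives $\|h_f\|_{C^0}=O(r^2)$. Shrinking $r$ makes the perturbation arbitrarily small, so $g=g_0+h_f$ is positive definite on the compact manifold $M$. The essential conceptual point---and effectively the content of the lemma---is that the coefficient-to-jet map is \emph{diagonal} in the symmetrized monomial basis: no hidden identities constrain the jet variables $\{g(A;B)(P)\}$ beyond the manifest tensor symmetries and the Hermitian reality $g(A;B)=\overline{g(B;A)}$. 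This stands in sharp contrast to the situation for covariant derivatives of the curvature tensor, where Bianchi-type identities intervene, and is precisely the freeness that makes the polynomial algebra $\PPFmkU$ usable in the invariance-theoretic arguments of Section~\ref{S3}.
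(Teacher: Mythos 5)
Your proposal is correct and follows essentially the same route as the paper: perturb $g_0$ by a cutoff K\"ahler potential $\phi\cdot\sum d(A;B)z^A\bar z^B$ supported near $P$, observe that the coefficient-to-jet map is diagonal with combinatorial normalization (the paper's $\varepsilon(A)\varepsilon(B)$, your $\kappa(A)\kappa(B)$), check that monomials of bidegree $(\ge2,\ge2)$ do not disturb Equation~(\ref{eqn-2.b}), and shrink the support radius to preserve positive definiteness. The only cosmetic difference is that the paper writes the coefficients explicitly as $(c(A;B)-g_0^{\vec z}(A;B)(P))/(\varepsilon(A)\varepsilon(B))$ rather than solving a diagonal linear system.
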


\begin{proof}
Let $\phi$ be a plateau function which is identically 1 for $|z|\le1$ and which vanishes identically
for $|z|\ge2$. Let $\phi_r(z):=\phi(z/r)$. Let $\varepsilon(\cdot)$ be the multiplicity
which was defined in
Equation~(\ref{eqn-2.c}). For $r$ small, we define:
$$f_r(z,\bar z)=\sum_{|A|=2}^n\sum_{|B|=2}^n\frac{c(A;B)-g^{\vec z}_0(A;B)(P)}
{\varepsilon(A)\varepsilon(B)}
\phi_r(z,\bar z)z^A\bar z^B\,.$$
The function $f_r$ is real and is supported arbitrarily close to $P$ for $r$ sufficiently small.
We follow the discussion of Section~\ref{S2.3} to define $h_f$. Let $g:=g_0+h_f$. Then
$$g_{\alpha\bar\beta}:=g_{0,\alpha\bar\beta}+
\sum_{|A|=2}^n\sum_{|B|=2}^n\frac{c(A;B)-g^{\vec z}_0(A;B)(P)}{\varepsilon(A)\varepsilon(B)}
\partial_{z_\alpha}\partial_{\bar z_\beta}
\left\{\phi_r(z,\bar z)z^A\bar z^B\right\}\,.$$
The perturbation has compact support near $P$; consequently, $g$
 extends smoothly to all of $M$. Furthermore,
since $\phi_r\equiv1$ near $P$,
$$g^{\vec z}(A;B)(P)=g_0^{\vec z}(A;B)(P)+c(A;B)-g^{\vec z}_0(A;B)(P)=c(A;B)\,.$$
Since $|A|\ge2$ and $|B|\ge2$, $g^{\vec z}$ satisfies Equation~(\ref{eqn-2.b}) at $P$.
Thus the only point remaining is to show that $g_{\alpha\bar\beta}$ is positive definite
if the parameter $r$ is chosen sufficiently small. Since $|A|\ge2$ and $|B|\ge2$,
there exists a constant $C$  so that if $r$ is small and if $|z|\le r$, we have:
$$\begin{array}{lll}
z^A\bar z^B\le Cr^4,&\partial_{z_\alpha}(z^A\bar z^B)\le Cr^3,&
\partial_{\bar z_\beta}(z^A\bar z^B)\le Cr^3,\\
\partial_{z_\alpha}\partial_{\bar z_\beta}(z^A\bar z^B)\le Cr^2,&
\phi_r\le C,&\partial_{z_\alpha}\phi_r\le Cr^{-1},\\
\partial_{\bar z_\beta}\phi_r\le Cr^{-1},&
\partial_{z_\alpha}\partial_{\bar z_\beta}\phi_r\le Cr^{-2}\,.\end{array}$$
After possibly increasing $C$, we may conclude that:
$$\partial_{z_\alpha}\partial_{\bar z_\beta}\{\phi_rz^A\bar z^B\}\le Cr^{2}\,.$$
Thus the perturbation of the original metric can be made arbitrary
small in the $C^0$ topology as $r\rightarrow0$ and hence
$g$ is positive definite if $r$ is sufficiently small.
\end{proof}

\section{The restriction map}\label{S3}
It is necessary to be somewhat more formal at this stage.
In Sectoin~\ref{S3}, we shall establish notation and make precise the notions
discussed previously in Definition~\ref{D1.3} and in Definition~\ref{D1.5}.

\begin{definition}
\rm
Let $\PPF_{ m}$ be the polynomial algebra in formal variables
$g(A;B)$ where $2\le|A|$ and $2\le|B|$.
Let
$\QQF_{ m}$ be the $\PPF_{ m}$ module
 of all $\QQ:=\PP^{\alpha\bar\beta}
 \partial_{z_\alpha}\circ\partial_{\bar z_\beta}$
which are $ S_+^2$ valued where $\PP^{\alpha\bar\beta}\in\PPF_{ m}$
for $1\le\alpha,\beta\le m$. If $\PP\in\PPF_m$ (resp. $\QQ\in\QQF_m$),
if $P$ is a point of K\"ahler manifold $\MMm$ of complex dimension $m$, and if $\vec z$ is
a system of local holomorphic coordinates on $M$ centered at $P$ satisfying the normalizations
of Lemma~\ref{lem-2.1}, then there is a natural evaluation $\PP(\MMm,\vec z)(P)$
(resp. $\QQ(\MMm,\vec z)(P)$). We use Lemma~\ref{lem-2.1} to see that we
can specify the variables $g(A;B)$ arbitrarily and therefore we may identify the abstract
element $\PP\in\PPF_m$ (resp. $\QQ\in\QQF_m)$ with the local formula it defines.
If $\PP(\MMm,\vec z)(P)=\PP(\MMm)(P)$ (resp.
$\QQ(\MMm,\vec z)(P)=\QQ(\MMm)(P)$) is independent
of the particular system of local holomorphic coordinates $\vec z$, then we say $\PP$ (resp. $\QQ$) is
{\it invariant}. Let $\PPFmU$ be the subalgebra and let $\QQFmU$ the $\PPFmU$ submodule of all such invariants.
The choice of $\vec z$ is unique up to the action
of $U(m)$. There is a natural dual action of $U(m)$ on $\PPF_m$ and $\QQF_m$;
$\PPFmU$ and $\QQFmU$ are simply
the fixed points of this action.
\end{definition}

A typical monomial $\AA$ of $\PP\in\PPF_m$ or of $\QQ\in\QQF_m$ takes the form:
$$
\AA=g(A_1^\AA;B_1^\AA)\dots g(A_\ell^\AA;B_\ell^\AA)
\partial_{z_{\alpha_\AA}}\circ\partial_{\bar z_{\beta_\AA}}\,.
$$
where we omit the $\partial_{z_{\alpha_\AA}}\circ\partial_{\bar z_{\beta_\AA}}$ variables
when dealing with an element of $\PPF_m$. Let
$c(\AA,\PP)$ (resp. $c(\AA,\QQ)$) be the coefficient of $\AA$ in $\PP$ (resp. $\QQ$);
we say that $\AA$  is a monomial
of $\PP$ (resp. $\QQ$) if $c(\AA,\PP)$ (resp. $c(\AA,\QQ)$) is non-zero.

\begin{definition}
\rm We introduce a grading on $\PPF_m$ and on $\QQF_{ m}$
by defining:
$$\ord(g(A;B)):=|A|+|B|-2\text{ and }
\ord(\AA)=\sum_i\{|A_i^{\AA}|+|B_i^{\AA}|-2\}\,.$$ The components of
$\RR$ have order $2$; the components of $\nabla\RR$ have order $3$,
and so forth. Let $T:=-\operatorname{id}\in U( m)$. Then
$T\AA=(-1)^{\ord(\AA)} \AA$. Thus if $\AA$ is a monomial of an invariant polynomial
$\PP$ or $\QQ$, then $\ord(\AA)$ is necessarily even. Decompose an invariant polynomial
$\PP=\PP_0+\PP_1+\dots$ where
$$
\PP_i:
=\sum_{\ord(\AA)=2i}c(\AA,\PP)
\AA\,.$$
Each $\PP_i$ is invariant separately since $U( m)$
preserves the order.
Let $\PPFmkU$ be the vector space of all elements of
$\PPFmU$
which are homogeneous of order $2k$ in the derivatives of the metric
and which are invariant under the action of the unitary group $U( m)$.
We define $\QQFmU$ and $\QQFmkU$ similarly. We may then decompose
$$\PPF_{ m}^{U}=\oplus_{ k}\PPFmkU\text{ and }
\QQFmU=\oplus_{ k}\QQFmkU\,.$$
\end{definition}

\begin{definition}
\rm Let $\deg_\gamma(A)$ be the number of times the
index $\gamma$ appears in a collection of indices $A$.
If
$$\AA_0=g(A_1^{\AA_0};B_1^{\AA_0})\dots g(A_\ell^{\AA_0};B_\ell^{\AA_0}),$$
let $\operatorname{len}(\AA_0):=\ell$ be the {\it length} of $\AA_0$.
Let $\deg_\gamma(\AA_0)$ (resp. $\deg_{\bar\gamma}(\AA_0)$) be the number of times
the holomorphic index $\gamma$ (resp. the anti-holomorphic index $\bar\gamma$)
appears in the monomial $\AA_0$:
\begin{eqnarray*}
&&\deg_\gamma(\AA_0)=\deg_\gamma(A_1^{\AA_0})+\dots+\deg_\gamma(A_\ell^{\AA_0}),\\
&&\deg_{\bar\gamma}(\AA_0)
=\deg_{\bar\gamma}(B_1^{\AA_0})+\dots+\deg_{\bar\gamma}(B_\ell^{\AA_0})\,.
\end{eqnarray*}
Similarly, if $\AA=\AA_0\partial_{z_{\alpha_\AA}}\circ\partial_{\bar z_{\beta_\AA}}$, set
$$\deg_\gamma(\AA):=\deg_\gamma(\AA_0)+\delta_{\gamma\alpha_\AA}\text{ and}
\deg_{\bar\gamma}(\AA):=\deg_{\bar\gamma}(\AA_0)+\delta_{\gamma\beta_\AA}\,.$$
\end{definition}

We wish to consider the space of universal scalar
valued curvature identities $\KKKFPmkU$
(resp. $ S_2^+$ valued curvature identities $\KKKFQmkU$)
which are homogeneous of order $2k$ in the derivatives
of the metric, which are defined on a manifold of complex dimension
$m\ge k$ (resp. $m\ge k+1$), and which
vanish when restricted to a manifold of complex dimension $k-1$ (resp. of complex dimension $k$).
We define these spaces algebraically as follows to give precision to the notation introduced previously
in Definition~\ref{D1.3} and in Definition~\ref{D1.5}.

\begin{definition}\label{defn-3.4}
\rm Define the {\it restriction map}
$$r_{ m,\nu}\{\AA\}:=\left\{\begin{array}{ll}
\AA&\text{ if }\deg_\alpha(\AA)=\deg_{\bar\alpha}(\AA)=0\text{ for all }\alpha> \nu\\
0&\text{ otherwise}
\end{array}\right\}\,.
$$
We note that $r_{ m,\nu}\{\AA\}$ is then a monomial in complex dimension $\nu$ so we may
extend $r_{m,\nu}$ to an algebra homomorphism and to a module homomorphism, respectively:
$$
r_{m,\nu}:\PPFmkU\rightarrow\PPF_{\nu, k}^{U}\text{ and }
r_{m, \nu}:\QQFmkU\rightarrow\QQF_{\nu, k}^{U}\,.
$$\end{definition}
There is an equivalent geometric formulation.
Let $\TT^{\ell}:=(\mathbb{T}^{\ell},g_{\mathbb{T}},J_{\mathbb{T}})$ be the flat
K\"ahler torus of complex dimension $\ell$ where
$\mathbb{T}^\ell:=\mathbb{R}^{2\ell}/\mathbb{Z}^{2\ell}$ is the rectangular torus of total
volume $1$, where $g_{\mathbb{T}}$ is the flat metric induced by the usual Euclidean metric, and
where $J_{\mathbb{T}}$ is the complex structure induced from the usual complex structure obtained
by identifying $\mathbb{R}^{2\ell}=\mathbb{C}^\ell$. Fix a base point $Q$ of $\mathcal{T}^\ell$.
The group of translations acts transitively on $\mathcal{T}^\ell$ so the particular base point chosen
is inessential.
The following Lemma gives an equivalent algebraic representation
of the spaces of universal curvature identities $\KKKFPmkU$ and $\KKKFQmkU$
which were discussed in Definition~\ref{D1.3} and
in Definition~\ref{D1.5}.

\begin{lemma}\label{L3.1}
Let $\nu<m$.
Let $P$ be a point of a K\"ahler manifold
$\NNv$ of complex dimension $\nu$.\begin{enumerate}
\item If $\PPP\in\PPFmkU$, then
$\PPP(\NNv\times\mathcal{T}^{m-\nu})(P,Q)
=(r_{m,\nu}\PPP)(\NNv)(P)$.
\item Let $i(P):=(P,Q)$ be the natural inclusion map of $N^\nu$ into $N^\nu\times T^{m-\nu}$.
If $\QQQ\in\QQFmkU$, then
$i^*\QQQ(\NNv\times\mathcal{T}^{m-\nu})(P,Q)
=(r_{m,\nu}\QQQ)(\NNv)(P)$.
\item $\KKKFPmkU=\ker(r_{m,k-1})\cap\PPFmkU$ and $\KKKFQmkU=\ker(r_{m,k})\cap\QQQFU$.
\end{enumerate}\end{lemma}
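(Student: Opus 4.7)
The plan is to derive (1) and (2) from a direct computation with the product metric and then bootstrap to (3) via Lemma~\ref{lem-2.2}. On $\NN^\nu\times\TT^{m-\nu}$ choose holomorphic coordinates $(z^1,\dots,z^\nu,z^{\nu+1},\dots,z^m)$ whose first $\nu$ components are a system on $\NN^\nu$ centered at $P$ and normalized by Lemma~\ref{lem-2.1}, and whose last $m-\nu$ components are the standard flat linear coordinates on $\TT^{m-\nu}$ near $Q$. The product metric is block diagonal with constant identity torus block, so $g_{\alpha\bar\beta}\equiv 0$ whenever $\alpha,\beta$ lie in different blocks and all higher derivatives of the torus block vanish identically. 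Consequently, for any formal variable $g(A;B)$ with $|A|,|B|\ge2$,
$$
g(A;B)(P,Q)=\begin{cases}g_\NN(A;B)(P)&\text{if every index in }A\cup B\text{ is }\le\nu,\\ 0&\text{otherwise;}\end{cases}
$$
the second case uses either the mixed-block vanishing of $g_{\alpha\bar\beta}$ or the fact that $|A|+|B|\ge4>2$ against a constant torus metric.

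For (1), this immediately shows that a monomial $\AA=\prod_i g(A_i;B_i)$ evaluates at $(P,Q)$ to $\AA(\NN^\nu)(P)$ exactly when $\deg_\alpha(\AA)=\deg_{\bar\alpha}(\AA)=0$ for every $\alpha>\nu$, and to $0$ otherwise; this is the defining rule for $r_{m,\nu}$ in Definition~\ref{defn-3.4}, so (1) follows by summing over monomials of $\PPP$. For (2), write $\QQQ=\sum_{\alpha,\beta}\PP^{\alpha\bar\beta}\partial_{z_\alpha}\circ\partial_{\bar z_\beta}$; the basis vectors $\partial_{z_\alpha},\partial_{\bar z_\beta}$ with $\alpha>\nu$ or $\beta>\nu$ involve directions normal to $i(\NN^\nu)$ and are killed by $i^*$, while those with $\alpha,\beta\le\nu$ descend canonically to the corresponding basis vectors on $\NN^\nu$. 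Applying (1) to each scalar coefficient $\PP^{\alpha\bar\beta}$ with $\alpha,\beta\le\nu$, and noting that $r_{m,\nu}$ in Definition~\ref{defn-3.4} likewise kills any monomial whose distinguished indices $\alpha_\AA,\beta_\AA$ exceed $\nu$, yields (2).

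Part (3) is then formal. The geometric condition defining $\KKKFPmkU$ in Definition~\ref{D1.3} — that $\PPP$ vanish when restricted from complex dimension $m$ to complex dimension $k-1$ — is by (1) with $\nu=k-1$ the statement that $(r_{m,k-1}\PPP)(\NN^{k-1})(P)=0$ for every K\"ahler manifold $\NN^{k-1}$ of complex dimension $k-1$ and every $P$. Lemma~\ref{lem-2.2} lets us prescribe the values $g(A;B)(P)$ on $\NN^{k-1}$ arbitrarily subject to the reality constraint $g(A;B)=\overline{g(B;A)}$, so this vanishing is equivalent to $r_{m,k-1}\PPP=0$ as a polynomial. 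The identical argument with $\nu=k$ and part (2) in place of (1) establishes the analogous statement $\KKKFQmkU=\ker(r_{m,k})\cap\QQQFU$.

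No step presents a genuine obstacle; the only delicate point is in part (2), where $i^*$ on a contravariant, $S_2^+$-valued field must be interpreted as the projection onto the $T\NN^\nu$-summand, and one must check that this matches the action of $r_{m,\nu}$ on the distinguished tensor indices $\alpha_\AA,\beta_\AA$ of Definition~\ref{defn-3.4}.
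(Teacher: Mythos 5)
Your proposal is correct and follows essentially the same route as the paper: evaluate on the product $\NNv\times\mathcal{T}^{m-\nu}$, observe that flatness of the torus factor kills every monomial containing an index exceeding $\nu$ (with $i^*$ handling the distinguished tensor indices in the $S_2^+$-valued case), and then invoke Lemma~\ref{lem-2.2} to identify the algebraic kernel of $r_{m,\nu}$ with the geometric vanishing condition. Your write-up merely supplies more detail than the paper's brief argument.
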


\noindent{\bf Note:}
 It is necessary to use the pull-back $i^*$ to regard the symmetric $2$-tensor
$P\rightarrow\QQQ(\NNv\times\mathcal{T}^{m-\nu})(P,Q)$
as a symmetric $2$-tensor on $\NNv$. But it
is not necessary to use pull-back to regard the function
$P\rightarrow\PPP(\NNv\times\mathcal{T}^{m-\nu})(P,Q)$
as a function on $\NNv$ so we shall omit the $i^*$ in that setting.

\begin{proof} Let $\MMm:=\NNv\times\mathcal{T}^{m-\nu}$. Any polynomial in the derivatives
of the metric which involves an index greater than $\nu$ vanishes since the metric is flat on
$\mathcal{T}^{m-\nu}$. Since we have restricted the symmetric $2$-tensors to
$\NNv$, a symmetric 2-tensor also vanishes
if it contains a holomorphic (or an anti-holomorphic) index greater than $\nu$. Assertion~(1)
and Assertion~(2) now follow.
Lemma~\ref{lem-2.2} permits us to identify an invariant polynomial (which is an algebraic
object) with the corresponding geometric formula it defines; Assertion~(3) now follows.
\end{proof}

We can now relate the restriction maps $r_{m,\nu}$ on $\xSSFm$
of Definition~\ref{D1.1} to the restriction maps $r_{m,\nu}$ on $\PPFmU$ and on $\QQFmU$
of Definition~\ref{defn-3.4}:
\begin{lemma}\label{L3.2}
\
\begin{enumerate}
\item Let $\XiPmk$ be as defined in Definition~\ref{D1.4}. \begin{enumerate}
\item If $m>\nu$, then $r_{m,\nu}\XiPmk=\Xi_{\mathfrak{P},\nu,k} r_{m,\nu}$ on $\xSSSFmk$.
\item If $m\ge k$, then $\XiPmk\xSSSFmk\subset\KKKFPmkU$.
\item If $m\ge k$ and if $0\ne\xSSmk\in\xSSSFmk$, then $r_{m,k}\XiPmk\xSSmk\ne0$.
\end{enumerate}
\item Let $\XiQmkT$ be as defined in Definition~\ref{D1.6}.
\begin{enumerate}
\item If $m>\nu$, then $r_{m,\nu}\XiQmkT=\Xi_{\mathfrak{Q},\nu,k} r_{m,\nu}$ on $\xSSSFmk$.
\item If $m\ge k+1$, then $\XiQmkT\xSSSFmk\subset\KKKFQmkU$.
\item If $m\ge k+1$ and if $0\ne\xSSmk\in\xSSSFmk$, then $r_{m,k+1}\XiQmkT\xSSmk\ne0$.
\end{enumerate}
\end{enumerate}
\end{lemma}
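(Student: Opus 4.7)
The plan is to treat parts (1) and (2) in parallel, establishing (a), (b), (c) in order. The ingredients are Lemma~\ref{L3.1} (geometric meaning of the restriction map), Remark~\ref{R1.1} (that $r_{m,\nu}:\xSSSFmk\to\mathfrak{S}_{\nu,k}$ is an isomorphism for $\nu\ge k$), Lemma~\ref{L1.1}, and Equation~(\ref{E1.g}). For (1a) and (2a), I would evaluate both sides on $\MMm = \NNv\times\mathcal{T}^{m-\nu}$ at $(P,Q)$ in a unitary frame adapted to the splitting. Because the torus factor is flat, $\RR_\MMm = \RR_\NNv\oplus 0$, and since $\xSSmk$ is a trace polynomial, $\xSSmk(\RR_\MMm) = (r_{m,\nu}\xSSmk)(\RR_\NNv)$ as a form supported in the $\NNv$-directions. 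Similarly $\Omega_\MMm = \Omega_\NNv + \Omega_\mathcal{T}$; in the binomial expansion of $\Omega_\MMm^{k}$ (resp.\ $\Omega_\MMm^{k+1}$), only the pure $\Omega_\NNv$-piece contributes when paired against a form with no $\mathcal{T}$-indices. The defining expressions then collapse to $\Xi_{\mathfrak{P},\nu,k}(r_{m,\nu}\xSSmk)$ and $\Xi_{\mathfrak{Q},\nu,k}(r_{m,\nu}\xSSmk)$, and Lemma~\ref{L3.1}(1)(2) identifies these with the restrictions on the left.

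For (1b) and (2b), the boundary-dimension target vanishes for trivial form-degree reasons. If $\nu = k-1$, then $\xSSmk(\RR)$ is a $2k$-form on a real $(2k-2)$-manifold, so $\Xi_{\mathfrak{P},k-1,k}\equiv 0$; if $\nu = k$, then $\Omega^{k+1}$ vanishes on a complex $k$-manifold, so $\XiQkkT\equiv 0$. Combined with (1a), (2a) and the characterizations $\KKKFPmkU = \ker(r_{m,k-1})\cap\PPFmkU$ and $\KKKFQmkU = \ker(r_{m,k})\cap\QQQFU$ from Lemma~\ref{L3.1}(3), this gives the range statements.

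For the non-triviality claims, Remark~\ref{R1.1} reduces matters to the critical dimension. Claim (1c) is immediate: setting $\tilde\xSS := r_{m,k}\xSSmk\ne 0$, Equation~(\ref{E1.g}) gives $\XiPkk(\tilde\xSS)\,d\nu_g = \tilde\xSS(\RR)$, whose integral over $\mathbb{CP}^{\vec\nu}$ with $\sum\nu_i = k$ is a characteristic number, non-zero for suitable $\vec\nu$ by Lemma~\ref{L1.1}.

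The substantive case is (2c). Let $\tilde\xSS := r_{m,k+1}\xSSmk\ne 0\in\mathfrak{S}_{k+1,k}$, and take the metric trace of the symmetric $2$-tensor $\XiQKkT(\tilde\xSS)$: using $\sum_\alpha e^\alpha\wedge\bar e^\alpha = \sqrt{-1}\,\Omega$ in a unitary frame, the trace equals $\tfrac{\sqrt{-1}}{(k+1)!}\,g(\tilde\xSS(\RR)\wedge\Omega,\,\Omega^{k+1})$, whose integral over a complex $(k+1)$-manifold reduces to $\sqrt{-1}\int_\MMm \tilde\xSS(\RR)\wedge\Omega$. Now choose $\MMm = \mathbb{CP}^{\vec\nu}$ with $\sum\nu_i = k+1$, equipped with the product K\"ahler metric in which the $i$-th Fubini-Study factor is rescaled so that $[\Omega] = c_1(T\MMm)$; the integral then becomes a non-zero multiple of $\int_\MMm (\tilde\xSS\cdot\Tr_1)(\RR)$, the characteristic number of $\tilde\xSS\cdot\Tr_1\in\mathfrak{S}_{k+1,k+1}$. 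Since $\mathbb{C}[\Tr_1,\dots,\Tr_{k+1}]$ is an integral domain, $\tilde\xSS\cdot\Tr_1\ne 0$, and Lemma~\ref{L1.1} delivers a $\vec\nu$ making this number non-zero. The main obstacle is precisely this last step: unlike in (1c), $\XiQmkT(\tilde\xSS)$ does not directly encode a characteristic number, so one must pass through the Lefschetz-type trace identity and exploit the freedom to rescale the K\"ahler class on the product $\mathbb{CP}^{\vec\nu}$.
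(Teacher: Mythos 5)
Your proposal is correct, and for parts (1a)--(1c), (2a), and (2b) it follows essentially the same route as the paper: the intertwining identities are read off from the defining formulas via the product-with-flat-torus picture of Lemma~\ref{L3.1}, the range statements follow because $\Omega^k$ (resp.\ $\Omega^{k+1}$) dies in complex dimension $k-1$ (resp.\ $k$), and (1c) reduces via Remark~\ref{R1.1} and Equation~(\ref{E1.g}) to the non-vanishing of characteristic numbers on $\mathbb{CP}^{\vec\nu}$. Where you genuinely diverge is (2c). The paper again reduces to $m=k+1$ and evaluates $\XiQKkT$ on $\MMK=\NNk\times\mathcal{T}^1$: since the curvature lives entirely on $\NNk$ while $\Omega_{\MM}^{k+1}=(k+1)\,\Omega_{\NN}^k\wedge\Omega_{\TT}$ forces $e^\alpha\wedge\bar e^\beta=dw\wedge d\bar w$, the transgression collapses to $\{\XiPkk(r_{k+1,k}\xSS_{k+1,k})(\NNk)\}\,\partial_w\circ\partial_{\bar w}$, and (2c) is an immediate corollary of (1c). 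You instead take the metric trace, use $\sum_\alpha e^\alpha\wedge\bar e^\alpha=\sqrt{-1}\,\Omega$ to identify $\int_M\operatorname{tr}_g\bigl(\XiQKkT(\tilde{\xSS})\bigr)\,d\nu_g$ with $\sqrt{-1}\int_M\tilde{\xSS}(\RR)\wedge\Omega$, and then rescale the factors of $\mathbb{CP}^{\vec\nu}$ so that $[\Omega]$ is proportional to $c_1$, turning this into the characteristic number of $\tilde{\xSS}\cdot\Tr_1\in\mathfrak{S}_{k+1,k+1}$; integrality of $\mathbb{C}[\Tr_1,\dots,\Tr_{k+1}]$ plus Lemma~\ref{L1.1} finishes it. Both arguments are sound. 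The paper's reduction is shorter and reuses the torus-product mechanism that drives the rest of the paper (Lemma~\ref{L3.2x} and Theorem~\ref{T1.3}); yours costs the extra cohomological bookkeeping of the K\"ahler-class rescaling but buys the slightly stronger conclusion that already the \emph{trace} of the transgression is generically non-zero, and it avoids leaving the class of irreducible examples $\mathbb{CP}^{\vec\nu}$ for products with flat tori.
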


\begin{proof} Recall that $\XiPmk(\xSSmk)=\textstyle\frac1{ k!}g(\xSSmk(\RR),\Omega^{ k})$.
Assertion~(1a) is now immediate. Furthermore since $\Omega^k$ vanishes on a K\"ahler
manifold of complex dimension $k-1$, $\Xi_{\mathfrak{P},k-1,k}=0$.
By Assertion~(1a), $r_{m,k-1}\XiPmk=\Xi_{\mathfrak{P},k-1,k}r_{m,k-1}=0$. By Lemma~\ref{L3.1},
$\KKKFPmkU=\ker(r_{m,k-1})\cap\PPFmkU$. Assertion~(1b) now follows.
By Remark~\ref{R1.1},
$r_{m,k}$ is an isomorphism from $\xSSSFmk$ to $\xSSSFkk$. Thus to prove Assertion~(1c),
it suffices to show that $\XiPkk$ is injective from $\xSSSFkk$ to $\PPFkkU$.
We use Equation~(\ref{E1.a}) and Definition~\ref{D1.4} to see that:
$$
\XiPkk(\xSSkk)(\mathcal{R})d\nu_g=\textstyle\frac1{ k!}g(\xSSkk(\RR),\Omega^{ k})d\nu_g=
\xSSkk(\mathcal{R})\,.
$$
If $\xSSkk\ne0$, we may apply Lemma~\ref{L1.1} establish Assertion~(1c) by choosing $\vec\nu$ so that
$$\int_{\mathbb{CP}^{\vec\nu}}\xSSkk(\mathcal{R}_{\mathbb{CP}^{\vec\nu}})\ne0\,.$$

 Recall that
$\XiQmkT(\xSSmk)={\textstyle\frac1{(k+1)!}}g(\xSSmk(\RR)\wedge e^\alpha\wedge\bar e^\beta,
\Omega^{k+1})e_\alpha\circ\bar e_\beta$.
Assertion~(2a) is now immediate. Since $\Omega^{k+1}$ vanishes on a K\"ahler
manifold of complex dimension $k$, $\Xi_{\mathfrak{Q},k,k}=0$.
By Assertion~(2a),
$r_{m,k}\XiQmkT=\Xi_{\mathfrak{Q},k,k}r_{m,k}=0$. By Lemma~\ref{L3.1},
$\KKKFQmkU=\ker(r_{m,k})\cap\QQQFU$. Assertion~(2b) now follows.
By Remark~\ref{R1.1},
$r_{m,k}$ is an isomorphism from $\xSSSFmk$ to $\mathfrak{S}_{k+1,k}$. Thus to prove Assertion~(2c),
we may take $m=k+1$. Let $\MMK:=\NNk\times\mathcal{T}^1$ where $\mathcal{T}^1$
is the flat K\"ahler torus of complex dimension 1.
Let $w$ be the usual periodic complex parameter on $\mathbb{T}^1$.
\begin{eqnarray*}
&&\textstyle\frac1{(k+1)!}\Omega_{\MM}^{k+1}=\frac1{(k+1)!}(\Omega_{\MM}+
\Omega_{{{\mathcal{T}}}})^{k+1}
=\frac1{k!}\Omega_{\MM}^k\wedge\Omega_{{{\mathcal{T}}}},\\
&&\Xi_{\mathfrak{Q},k+1,k}(\mathcal{S}_{k+1,k})(\MMK)=\left\{
\Xi_{\mathfrak{P},k,k}(r_{k+1,k}\mathcal{S}_{k+1,k})(\NNk)\right\}\partial_w\circ\partial_{\bar w}\,.
\end{eqnarray*}
Because $r_{k+1,k}$ is an injective map from
$\mathfrak{S}_{k+1,k}$ to $\xSSSFkk$,
Assertion~(2c) follows from Assertion~(1c).\end{proof}

\begin{lemma}\label{L3.2x}
Let $\TQmk$ be as defined in Definition~\ref{D1.7}.
\begin{enumerate}
\item If $m>\nu$, then $r_{m,\nu}\TQmk=\Theta_{\mathfrak{Q},\nu,k} r_{m,\nu}$ on $\xSSSFmk$.
\item If $m\ge k+1$, then $\TQmk\xSSSFmk\subset\KKKFQmkU$.
\item If $m\ge k+1$ and if $0\ne\xSSmk\in\xSSSFmk$, then $r_{m,k+1}\TQmk\xSSmk\ne0$.
\end{enumerate}\end{lemma}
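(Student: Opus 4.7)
The plan is to mirror the proof of Lemma \ref{L3.2} for $\XiQmkT$, using the variational definition of $\TQmk$ in Definition \ref{D1.7} together with the product factorization of $\XiPmk$ already recorded in Lemma \ref{L3.2}(1a). For Assertion (1), I would take $\MMm = \NNv \times \mathcal{T}^{m-\nu}$ and use a K\"ahler variation $\tilde h$ on $\MMm$ obtained by pulling back a K\"ahler variation $h_\NN$ on $\NNv$, so $\tilde h$ has no components in the torus directions. Lemma \ref{L3.2}(1a), combined with Fubini and $\mathrm{vol}(\mathcal{T}^{m-\nu}) = 1$, gives
\[
\XiPmk(\xSSmk)[\NNv_\varepsilon \times \mathcal{T}^{m-\nu}] = \Xi_{\mathfrak{P},\nu,k}(r_{m,\nu}\xSSmk)[\NNv_\varepsilon].
\]
Differentiating at $\varepsilon=0$, invoking Definition \ref{D1.7} on both manifolds, and noting that the pairing of $\TQmk(\xSSmk)$ with $\tilde h$ sees only the $\NNv$-components of $\TQmk(\xSSmk)$, one arrives at
\[
\int_{\NNv} \langle i^*\TQmk(\xSSmk), h_\NN \rangle\, d\nu_\NN = \int_{\NNv}\langle \Theta_{\mathfrak{Q},\nu,k}(r_{m,\nu}\xSSmk), h_\NN\rangle\, d\nu_\NN.
\]
Since $h_\NN$ is arbitrary and both tensors are smooth local invariants, this forces pointwise equality on $\NNv$, and Lemma \ref{L3.1}(2) translates this into Assertion (1).

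Assertion (2) then follows immediately by specializing Assertion (1) to $\nu = k$: Equation (\ref{E1.g}) identifies $\XiPkk(\xSSkk)[\NNk]$ with the characteristic number $\int_\NNk \xSSkk(\RR_\NN)$, a topological invariant whose metric variation vanishes. Hence $\TQkk\equiv 0$, so $r_{m,k}\TQmk = 0$, and Lemma \ref{L3.1}(3) gives $\TQmk(\xSSmk)\in\KKKFQmkU$.

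For Assertion (3), by Assertion (1) and Remark \ref{R1.1} it suffices to show that $\TQKk$ does not annihilate any nonzero element of $\xSSSFKk$. I would work on $\MMK = \NNk \times \mathcal{T}^1$ and rescale the torus, $g_\varepsilon = g_\NN \oplus (1+\varepsilon)g_\mathcal{T}$, which is a legitimate K\"ahler deformation given by $h = g_\mathcal{T}$ (extended by zero). Since $\Omega_\mathcal{T}^2 = 0$ and the curvature vanishes on the torus factor, the same factorization as in Assertion (1) yields
\[
\Xi_{\mathfrak{P},k+1,k}(\xSSKk)[\MMK_\varepsilon] = (1+\varepsilon)\int_\NNk r_{k+1,k}\xSSKk(\RR_\NN).
\]
Using that $r_{k+1,k}$ is an isomorphism $\xSSSFKk\to\xSSSFkk$ (Remark \ref{R1.1}), I would invoke Lemma \ref{L1.1} to choose $\NNk = \mathbb{CP}^{\vec\nu}$ so the right-hand characteristic number is nonzero; the $\varepsilon$-derivative is then nonzero, which by Definition \ref{D1.7} forces $\TQKk(\xSSKk)\ne 0$.

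The main obstacle is the factorization step itself: justifying carefully that on $\NNv \times \mathcal{T}^{m-\nu}$, and likewise on the rescaled product in Assertion (3), the scalar integrand $g(\xSSmk(\RR),\Omega^k)$ decouples so that $\XiPmk$ reduces cleanly to a quantity on $\NNv$ times the torus volume. This uses the vanishing of curvature components involving the torus, which forces $\xSSmk(\RR)$ to be a $(k,k)$-form pulled back from $\NNv$, together with the observation that only the purely $\NNv$-part of $\Omega^k$ pairs nontrivially against it; for the rescaling in Assertion (3) one additionally invokes $\Omega_\mathcal{T}^2 = 0$ to check that the rescaling contributes only the overall volume factor. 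Once this factorization is in hand, all three assertions fall out by the same pattern used in Lemma \ref{L3.2}.
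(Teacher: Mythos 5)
Your Assertion (3) and the overall product-with-a-flat-torus strategy match the paper, and the factorization you flag as the ``main obstacle'' is exactly the content of Lemma~\ref{L3.1} together with Lemma~\ref{L3.2}(1a), so that part is fine. The genuine gap is in Assertion (1), at the step ``since $h_\NN$ is arbitrary \dots this forces pointwise equality.'' The test tensors $h_\NN$ are \emph{not} arbitrary sections of $S_+^2\NNv$: for $g+\varepsilon h$ to be a K\"ahler variation one needs $d\Omega_h=0$, so $h$ ranges only over the proper subspace of Hermitian symmetric tensors whose associated $(1,1)$-forms are closed. An identity $\int_N\langle T,h\rangle\,d\nu=0$ for all such $h$ only says that $\Omega_T$ is $L^2$-orthogonal to closed $(1,1)$-forms (a divergence-type $T$ would also satisfy it) and does not yield $T=0$ pointwise. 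Since your Assertion (2) is deduced from ``hence $\Theta_{\mathfrak{Q},k,k}\equiv 0$'' by the same token, the gap propagates there as well.

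The paper closes this gap by briefly enlarging the category: for a general Hermitian metric it forms the averaged complex connection $\tilde\nabla^g=(-J\nabla^gJ+\nabla^g)/2$, so that $\xSSmk(\mathcal{R}(\tilde\nabla^g))$, and hence $\XiPmk$, $\XiQmkT$ and $\TQmk$, are defined for arbitrary \emph{Hermitian} variations. For those, $h$ may be any section of $S_+^2$, so one may take $h$ to be the metric dual of $\left\{r_{m,\nu}\TQmk(\xSSmk)-\Theta_{\mathfrak{Q},\nu,k}(r_{m,\nu}\xSSmk)\right\}(\RR_\NN)$ and conclude that its $L^2$-norm vanishes; the same device gives $\Theta_{\mathfrak{Q},k,k}=0$ because the characteristic number is independent of the Hermitian metric once complex connections are in play. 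If you insist on staying inside the K\"ahler category, you would need a separate argument that a natural local invariant integrating to zero against all closed $(1,1)$-variations on all K\"ahler manifolds must vanish, and that is not supplied. Your Assertion (3), by contrast, is essentially the paper's own argument (rescaling the torus factor) and is correct as written.
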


\begin{proof} It is necessary to expand the category in which we are working, if only briefly.
Let $\MMm=(M,g,J)$ be a Hermitian manifold of complex dimension $m$.
Let $\nabla^g$ be the associated Levi-Civita
connection. We average over the action of the complex structure $J$ to define
an auxiliary connection $\tilde\nabla^g:=(-J\nabla^gJ+\nabla^g)/2$
on the tangent bundle. It is immediate that $\tilde\nabla^gJ=J\tilde\nabla^g$ and thus $\tilde\nabla^g$
is a complex connection. The associated curvature $\mathcal{R}(\tilde\nabla^g)$ is then a complex
endomorphism and consequently $\xSSmk(\mathcal{R}(\tilde\nabla^g))\in\Lambda^{2k}(M)$ is well defined
and we may extend Definition~\ref{D1.4}, Definition~\ref{D1.6}, and Definition~\ref{D1.7} to
this setting. If $\MMm_\epsilon$ is a Hermitian variation, then  $\TQmk(\xSSmk)$ is characterized by the identity:
$$
\partial_{\varepsilon}\left.\left\{\int_M\XiPmk({{\xSSmk}})(\mathcal{R}_{\MM_\varepsilon})
d\nu_{\MM_\epsilon}\right\}\right|_{\varepsilon=0}
=\int_M\left\langle
\left\{\TQmk(\xSSmk)\right\}(\RR_\MM),h\right\rangle d\nu_g\,.
$$
Let $m>\nu$. We consider a product of the form
$\MMm_\epsilon=\NNv_\epsilon\times\mathcal{T}^{m-\nu}$
where the variation is trivial on the K\"ahler torus and where $\NNv_\epsilon$ is a
Hermitian variation. Since $\mathcal{T}^{m-\nu}$ has
unit volume, we can ignore the integral over the torus
and apply Lemma 3.1 and Lemma 3.2
to compute:
\begin{eqnarray*}
&&\partial_{\varepsilon}\left.\left\{\int_M\XiPmk(\xSSmk)(\mathcal{R}_{\MM_\varepsilon})
d\nu_{\MM_\epsilon}\right\}\right|_{\varepsilon=0}\\
&&\qquad=\int_M\left\langle
\left\{\TQmk(\xSSmk)\right\}(\RR_\MM),h\right\rangle d\nu_g\\
&&\qquad=\int_N\left\langle
\left\{r_{m,\nu}\TQmk(\xSSmk)\right\}(\RR_\NN),h\right\rangle d\nu_g.
\end{eqnarray*}
 We may also compute:
\begin{eqnarray*}
&&\partial_{\varepsilon}\left.\left\{\int_N\Xi_{\mathfrak{P},\nu,k}(r_{m,\nu}\xSSmk)
(\mathcal{R}_{\NN_\varepsilon})
d\nu_{\NN_\epsilon}\right\}\right|_{\varepsilon=0}\\
&&\qquad=\int_N\left\langle
\left\{\Theta_{\mathfrak{Q},\nu,k}(r_{m,\nu}\xSSmk)\right\}(\RR_\NN),h\right\rangle d\nu_g\,.
\end{eqnarray*}
This shows
$$0=\int_n\langle\left\{r_{m,\nu}\TQmk(\xSSmk)
-\Theta_{\mathfrak{Q},\nu,k}(r_{m,\nu}\xSSmk)\right\}(\RR_\NN),h\rangle d\nu_g\,.
$$
Since it is not necessary to restrict to K\"ahler variations, we can complete the proof of Assertion~(1)
by taking $h$ to be the dual of
$$\left\{r_{m,\nu}\TQmk(\xSSmk)
-\Theta_{\mathfrak{Q},\nu,k}(r_{m,\nu}\xSSmk)\right\}(\RR_\NN)$$
with respect to the metric $g$ to obtain
$$0=\int_N||\left\{r_{m,\nu}\TQmk(\xSSmk)
-\Theta_{\mathfrak{Q},\nu,k}(r_{m,\nu}\xSSmk)\right\}(\RR_\NN)||_g^2d\nu_g\,.$$
In complex dimension $k$, $\XiPkk(\xSSkk)[M]$
is a characteristic number and, consequently, since we constructed complex connections,
$\XiPkk(\xSSkk)[M]$ is
independent of the particular Hermitian metric chosen. This shows the
Euler Lagrange Equations are trivial and thus $\Theta_{\mathfrak{Q},k,k}=0$. Assertion~(2) now follows
from Assertion~(1).

We return to the K\"ahler setting and, by Assertion~(1), take $m=k+1$ in proving Assertion~(3).
Let $\MMK:=\NNk\times\mathcal{T}^1$ where $\mathcal{T}^1$
is the flat K\"ahler torus of complex dimension 1.
Let $w$ be the usual periodic complex parameter on $\mathbb{T}^1$. We take a variation of the form
$g_\varepsilon:=g_{\NN}+(1+\varepsilon)dw\circ d\bar w$. The curvature is unchanged
but we have $d\nu_\varepsilon=(1+\epsilon)d\nu_{\NN}d\nu_{\mathcal{T}}$. Consequently,
$$
\TQKk(\xSSKk)(\MMK)=\left\{\XiPkk(r_{k+1,k}\xSSKk)(\NNk)\right\}\partial_w\circ\partial_{\bar w}
$$
and Assertion~(3) follows from Assertion~(1c) of Lemma~\ref{L3.2}.
\end{proof}

\section{The action of the unitary group}\label{S4}

In this section, we use unitary invariance to study the spaces
$\PPFmU$ and $\QQFmU$. We then examine the spaces of universal curvature
identities $\KKKFPmkU$ and $\KKKFQmkU$ and obtain a fundamental estimate for
their dimensions.

\begin{lemma}\label{L3.3}
Let $\UU\in\PPFmU$ or let $\UU\in\QQFmU$. Let $\AA$ be a monomial of $\UU$. Express
$$\AA=g(A_1^{\AA};B_1^{\AA})\dots
g(A_\ell^{\AA};B_\ell^{\AA})\partial_{z_{\alpha_\AA}}\circ\partial_{\bar z_{\beta_\AA}}$$
where we omit the $\partial_{z_{\alpha_\AA}}\circ\partial_{\bar
z_{\beta_\AA}}$ variables if $\AA\in\PPFmU$.  Set $\operatorname{len}(\AA)=\ell$.
\begin{enumerate}
\item If $1\le\alpha\le m$, then $\deg_\alpha(\AA)=\deg_{\bar\alpha}(\AA)$.
\item
Assume that
$\deg_\alpha(\AA)>0$. Fix $\beta\ne\alpha$
and create a monomial $\tilde{\AA}$ by changing exactly one holomorphic index in
$\AA$ $\alpha\rightarrow\beta$.
Then there is a monomial $\AA_1$ of $\UU$ which is different from
$\AA$ and which also can create $\tilde{\AA}$ either by
changing exactly one holomorphic index $\alpha\rightarrow\beta$ or by
changing exactly one anti-holomorphic index $\bar\beta\rightarrow\bar\alpha$.
\item If $\UU\in\PPFmU$, then  there exists a monomial $\AA$ of $\UU$ so
$\deg_\alpha(\AA)=0$ for $\alpha>\operatorname{len}(\AA)$.
\item If $\UU\in\QQFmU$, then there exists a monomial $\AA$ of $\UU$ so
$\deg_\alpha(\AA)=0$ for $\alpha>\operatorname{len}(\AA)+1$.
\end{enumerate}
\end{lemma}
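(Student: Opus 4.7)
The plan is to prove (1) via the action of the maximal torus, (2) via the infinitesimal action of a single elementary matrix after complexification, and (3)--(4) via an extremal principle that combines (1), (2) and the embedded $S_m$-action.

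For (1), consider the maximal torus $T=\{\operatorname{diag}(e^{i\theta_1},\dots,e^{i\theta_m})\}\subset U(m)$. Each formal variable $g(A;B)$ is a $T$-weight vector, hence each monomial $\AA$ (including, in the $\QQFmU$ case, the open slot $\partial_{z_{\alpha_\AA}}\circ\partial_{\bar z_{\beta_\AA}}$) is a weight vector of weight $(\deg_{\bar\alpha}(\AA)-\deg_\alpha(\AA))_\alpha$. Distinct monomials are linearly independent eigenvectors, so $T$-invariance of $\UU$ forces every monomial in the support of $\UU$ to have weight zero, giving (1).

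For (2), complexify the $\mathfrak{u}(m)$-invariance of $\UU$ to $\mathfrak{gl}(m,\mathbb{C})$-invariance. The elementary matrix $E_{\beta\alpha}$ then acts as a derivation whose effect on $g(A;B)$ sums over holomorphic occurrences of $\alpha$ in $A$ with $\alpha\to\beta$ and, with opposite sign from the contragredient action on anti-holomorphic slots, over occurrences of $\bar\beta$ in $B$ with $\bar\beta\to\bar\alpha$. Extract the coefficient of the prescribed $\tilde\AA$ in the identity $E_{\beta\alpha}\UU=0$: it receives a contribution $\pm c(\AA,\UU)\ne 0$ from the distinguished holomorphic position $p$ of $\AA$, so some other monomial $\AA_1\ne\AA$ in the support must compensate. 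Inspecting the two derivation branches shows that such $\AA_1$ must differ from $\AA$ either by an $\alpha\leftrightarrow\beta$ swap at two holomorphic positions (Case 1) or by a coupled change $\alpha\to\beta$ at one holomorphic position together with $\bar\alpha\to\bar\beta$ at one anti-holomorphic position (Case 2).

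For (3) and (4), among monomials of $\UU$ I choose $\AA$ first maximizing $\operatorname{len}(\AA)$ and, within that, minimizing $f(\AA):=\max\{\alpha:\deg_\alpha(\AA)>0\}$. Since $S_m\subset U(m)$ preserves $\UU$, the equivariance $c(\sigma\AA,\UU)=c(\AA,\UU)$ for every $\sigma\in S_m$ allows me to apply a permutation so that $\AA$ uses exactly $\{1,\dots,D\}$ with $D=f(\AA)$. If $D$ exceeded the claimed bound, length maximality would force $|A_i|=|B_i|=2$ for all $i$, so $\sum_i|A_i|=2\operatorname{len}(\AA)$ (or $2\operatorname{len}(\AA)+1$ total holomorphic positions in the $\QQFmU$ case, once the extra slot is counted) is strictly smaller than $2D$; pigeonhole then exhibits some index—relabelled to $D$—with $\deg_D(\AA)=1$, equal to $\deg_{\bar D}(\AA)$ by (1). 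Apply (2) with $\alpha=D$ and any $\beta\in\{1,\dots,D-1\}$: a Case 2 monomial $\AA''$ would delete the index $D$ from $\AA$ in both slots, producing a monomial of the same length with $f(\AA'')<D$, contradicting the minimality of $f(\AA)$. Hence Case 2 cannot contribute to the support of $\UU$, and (2) must produce a Case 1 swap instead.

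The main obstacle is converting this ``only Case 1 available'' alternative into a contradiction on $c(\AA,\UU)$, because a single Case 1 swap preserves both the length and the value $D$. The plan is to aggregate the coefficient identities coming from $E_{\beta,D}\UU=0$ over all admissible $\beta$ and every choice of distinguished position of the unique $D$ (and, symmetrically, of the unique $\bar D$), and to impose the $S_m$-equivariance of the coefficients across the finite swap orbit of $\AA$; the resulting linear system should admit only $c(\AA,\UU)=0$ as solution, yielding the required contradiction and completing (3). Assertion (4) is proved along identical lines, the shift by one in the bound arising precisely from the extra holomorphic and anti-holomorphic positions contributed by the open slot $\partial_{z_{\alpha_\AA}}\circ\partial_{\bar z_{\beta_\AA}}$.
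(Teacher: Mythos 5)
Your treatment of Assertions (1) and (2) is sound and essentially the paper's: (1) is the maximal-torus weight argument, and your infinitesimal $E_{\beta\alpha}$-derivation version of (2) is equivalent to the paper's finite $U(2)$-rotation argument and arrives at the same dichotomy. The gap lies in (3)--(4), in two places.

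First, the step ``length maximality would force $|A_i^{\AA}|=|B_i^{\AA}|=2$ for all $i$'' is false. A general $\UU\in\PPFmU$ may involve higher jets of the metric: the full contraction $\sum_{\alpha,\beta,\gamma}g(\alpha,\beta,\gamma;\bar\alpha,\bar\beta,\bar\gamma)$ is unitarily invariant and every one of its monomials has length $1$ with $|A_1|=|B_1|=3$. Maximizing $\operatorname{len}(\AA)$ over the support of $\UU$ says nothing about the sizes of the $A_i$; the forced equality $|A_i|=|B_i|=2$ appears only in the paper's Lemma~\ref{L4.2}, where it is a consequence of membership in $\ker(r_{m,k-1})$, not of invariance alone. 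Without it the count $\sum_i|A_i^{\AA}|<2D$ underlying your pigeonhole collapses, so you cannot even produce the multiplicity-one index your argument requires.

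Second, and more seriously, your extremal principle (maximize length, then minimize the largest index $D$) only rules out your Case 2; you concede that a Case 1 swap preserves both the length and the value $D$, and your proposed remedy --- aggregating the identities $E_{\beta,D}\UU=0$ into a linear system that ``should admit only $c(\AA,\UU)=0$ as solution'' --- is a plan, not an argument. This is precisely where the paper chooses a different extremal quantity: it maximizes $\nu(\AA)$, the length of the initial segment of factors $A_i^{\AA}$ containing no holomorphic index exceeding $i$, breaking ties by maximizing $\deg_{\nu+1}(A_{\nu+1}^{\AA})$. With that two-level choice \emph{both} branches of Assertion (2) strictly improve the extremal data and hence both yield contradictions. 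You would need either to adopt such a refined extremal principle or to actually solve your linear system; as written, (3) and (4) are not proved.
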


\begin{proof}
Fix $1\le\alpha\le m$ and consider the unitary transformation:
\begin{eqnarray*}
&&T_\alpha(\partial_{z_\gamma}):=\left\{\phantom{..}\begin{array}{rll}
e^{\sqrt{-1}\theta}\partial_{z_\gamma}&\text{if}&\gamma=\alpha\\
\partial_{z_\gamma}&\text{if}&\gamma\ne\alpha\end{array}\right\}\,,\\
&&T_\alpha(\partial_{\bar z_\gamma}):=\left\{\begin{array}{rll}
e^{-\sqrt{-1}\theta}\partial_{\bar z_\gamma}&\text{if}&\gamma=\alpha\\
\partial_{\bar z_\gamma}&\text{if}&\gamma\ne\alpha\end{array}\right\}\,.
\end{eqnarray*}
Then $T_\alpha\AA=e^{\sqrt{-1}\theta\{\deg_\alpha(\AA)-\deg_{\bar\alpha}(\AA)\}}\AA$,
so we have
$$
T_\alpha\UU=\UU
=\sum_{\AA}c(\AA,\UU)e^{\sqrt{-1}\theta\{\deg_\alpha(\AA)-\deg_{\bar\alpha}(\AA)\}}\AA\,.
$$
As $\theta$ was arbitrary, $c(\AA,\UU)\ne0$ implies
$\deg_\alpha(\AA)=
\deg_{\bar\alpha}(\AA)$. Assertion~(1) follows.

We now prove Assertion~(2). Fix indices $\alpha$ and $\beta$.
Set:
\begin{eqnarray*}
&&\nu:=\deg_\alpha(\AA)+\deg_\beta(\AA)=
\deg_{\bar\alpha}(\AA)+\deg_{\bar\beta}(\AA),\\
&&\tilde\UU:=\sum_{\BB :\deg_\alpha(\BB )+\deg_\beta(\BB )
=\nu}c(\BB ,\UU)\BB \,.
\end{eqnarray*}
Then $\tilde\UU$ is invariant under the action of $U(2)$ on the indices
$\{\alpha,\beta\}$ and we work with $\tilde\UU$ henceforth in the proof of Assertion~(2);
each monomial of $\tilde\UU$ is homogeneous of degree $\nu$
in $\{\alpha,\beta\}$ and also in $\{\bar\alpha,\bar\beta\}$.
Let $\tilde{\AA}$ be obtained from $\AA$
by changing a single holomorphic index $\alpha\rightarrow\beta$. Since
$$\deg_\alpha(\tilde{\AA})=\deg_\alpha(\AA)-1
=\deg_{\bar\alpha}(\AA)-1=\deg_{\bar\alpha}(\tilde{\AA})-1\,,$$
Assertion~(1) implies $\tilde{\AA}$ is not a monomial of $\tilde\UU$.
Let $u,v\in\mathbb{C}$ satisfy $|u|^2+|v|^2=1$. Consider
the unitary transformation
\begin{equation}\label{E4.b}
\begin{array}{l}
T\partial_{z_\sigma}=\left\{\begin{array}{rll}
\partial_{z_\sigma}&\text{ if }\sigma\ne\alpha,\beta\\
u\partial_{z_\alpha}+v\partial_{z_\beta}&\text{ if }\sigma=\alpha\\
-\bar v\partial_{z_\alpha}+\bar u\partial_{z_\beta}&\text{ if }\sigma=\beta
\end{array}\right\},\\
T\partial_{\bar z_\sigma}=\left\{\begin{array}{rll}
\partial_{\bar z_\sigma}&\text{ if }\sigma\ne\alpha,\beta\\
\bar u\partial_{\bar z_\alpha}+\bar v\partial_{\bar z_\beta}&\text{ if }\sigma=\alpha\\
-v\partial_{\bar z_\alpha}+u\partial_{\bar z_\beta}&\text{ if }\sigma=\beta
\end{array}\right\}\,.\end{array}
\end{equation}
We may expand
$$T\tilde\UU=f(u,v,\bar u,\bar v)\tilde{\AA}+\text{other terms}$$
where $f$ is homogeneous of degree $2\nu$ in $\{u,v,\bar u,\bar
v\}$; since $T\tilde\UU=\tilde\UU$ and since $\tilde{\AA}$ is not a
monomial of $\tilde{\UU}$, $f(u,v,\bar u,\bar v)=0$ for
$|u|^2+|v|^2=1$. Since $f$ is homogeneous, $f(u,v,\bar u,\bar v)$
vanishes for all $(u,v)$ and thus is the trivial polynomial. We have
$T\AA=n_{\AA,\tilde\AA} v u^{\nu-1}\bar u^\nu\tilde{\AA}+\dots$
where $n_{\AA,\tilde\AA}$ is a positive integer which reflects the
number of ways that $\AA$ can transform to $\tilde{\AA}$ by changing
a single holomorphic index $\alpha\rightarrow\beta$. There must
therefore be some monomial $\AA_1$ of $\UU$ which is
different from $\AA$ and which transforms to $\tilde{\AA}$ to create
a term involving $vu^{\nu-1}\bar u^\nu\tilde{\AA}+\dots$ and which
helps to cancel the corresponding term in $T\AA$. In view of
Equation~(\ref{E4.b}), this can only be by changing a holomorphic
index
 $\alpha\rightarrow\beta$ or an anti-holomorphic index
$\bar\beta\rightarrow\bar\alpha$. Assertion~(2) now follows.

We now prove Assertions~(3) and (4). We first introduce some
additional notation. Choose
$\nu=\nu(\AA)$ maximal among all possible rearrangements defining $\AA$
so
$$\deg_\alpha(A_i^{\AA})=0\text{ for }i<\alpha\text{ and }1\le i\le\nu\,.$$
If $\nu(\AA)=\ell$, go on to the next step.
If $\nu<\ell$, choose $\AA$ to be a monomial of  $\UU$ so that
$\nu(\AA)$ is maximal. Amongst all such possibilities choose $\AA$ so
that $\deg_{\nu+1}(A_{\nu+1}^\AA)$ is maximal.
Since $\nu(\AA)<\ell$, there is some index $\alpha>\nu+1$ so
$\deg_\alpha(A_{\nu+1}^{\AA})>0$. By making a coordinate permutation,
we may assume $\alpha=\nu+2$. Let $\AA=A_{\nu+1}^\AA A_0$.
Define $A_{\nu+1}^{\tilde{\AA}}$ by changing one holomorphic index
$\nu+2$ to $\nu+1$ in $A_{\nu+1}^{\AA}$ and let $\tilde\AA=A_{\nu+1}^{\tilde{\AA}}\AA_0$.
Apply Assertion~(2) to construct a monomial $\AA_1\ne\AA$ of $\UU$.
There are two possibilities:
\begin{enumerate}
\item $\AA_1$ transforms to $\tilde{\AA}$ by changing a holomorphic index
$\nu+2\rightarrow \nu+1$. Since
$\deg_\alpha(A_1^{\AA})=\dots=\deg_\alpha(A_{\nu}^{\AA})=0$ for $\alpha>\nu$,
$A_i^{\AA_1}=A_i^{\AA}$ for $i\le \nu$. Since $\AA_1\ne\AA$,
$A_{\nu+1}^{\AA_1}\ne A_{\nu+1}^{\AA}$. Consequently, $\nu(\AA_1)=\nu$ and
$\deg_{\nu+1}(A_{\nu+1}^{\AA_1})>\deg_{\nu+1}(A_{\nu+1}^{\AA})$. This contradicts the choice
of $\AA$ with $\nu(\AA)=\nu$ and $\deg_{\nu+1}(A_{\nu+1}^{\AA})$ maximal.
Thus this possibility is impossible.
\item $\AA_1$ transforms to $\tilde{\AA}$ by changing an anti-holomorphic index
$\bar \nu\rightarrow\overline{\nu+1}$. Then $A_i^{\AA_1}=A_i^{\tilde{\AA}}$ for all $i$.
Thus $\nu(\AA_1)=\nu$ and
$\deg_\nu(A_\nu^{\AA_1})>\deg_\nu(A_\nu^{\AA})$ which is impossible.
\end{enumerate}

The contradiction derived above shows we may choose $\AA$ so
$\deg_\alpha(A_i^{\AA})=0$ for $\alpha>\ell$ and $i\le\ell$. If $\UU\in\PPFmkU$, then
Assertion~(3) follows. Suppose $\UU\in\QQFmkU$.
If $\alpha_\AA\le \ell+1$, then we are done. If $\alpha_\AA>\ell+1$, we may interchange
the index $\alpha_\AA$ and the index $\ell+1$ to assume $\alpha_\AA=\ell+1$. This completes
the proof of Assertion~(4).
\end{proof}

The following technical Lemma is crucial to our study of the spaces of universal curvature identities
$\KKKFPmkU=\ker(r_{m,k-1})\cap\PPFmkU$ and $\KKKFQmkU=\ker(r_{m,k})\cap\QQQFU$.

\begin{lemma}\label{L4.2}
Let $\UU\in\KKKFPmkU$ or let $\UU\in\KKKFQmkU$.
Let
$$\AA=g(A_1^\AA;B_1^\AA)\dots g(A_\ell^\AA;B_{\ell}^\AA)\partial_{z_{\alpha_\AA}}
\circ\partial_{\bar z_{\beta_\AA}}$$
be a monomial of $\UU$; we omit the $\partial_{z_{\alpha_\AA}}
\circ\partial_{\bar z_{\beta_\AA}}$ variables if $\UU\in\KKKFPmkU$.
\begin{enumerate}
\item We have that $|A_i^\AA|=|B_i^\AA|=2$
and $\ell= k$.
\item There exists a monomial $\AA$ of $\UU$ satisfying:
\begin{enumerate}
\medbreak
\item For $1\le i\le k$, there exists an index $\alpha_i$ so that
$A_i^\AA=(\alpha_i,\alpha_i)$.
\item $\alpha_i=i$ for $1\le i\le k$.
\item If $\UU\in\KKKFQmkU$, then $\alpha_\AA= k+1$.
\item For $1\le i\le k$, there exists an index $\beta_i$ so that
$B_i^\AA=(\beta_i,\beta_i)$.
\item The indices $\{\beta_1,\dots,\beta_{ k}\}$ are a permutation of the indices
$\{1,\dots, k\}$.
\item If $\UU\in\KKKFQmkU$, then $\beta_\AA= k+1$.
\end{enumerate}\end{enumerate}
\end{lemma}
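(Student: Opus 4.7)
For Assertion~(1), I would decompose $\UU$ by \emph{partition type}: each monomial $\AA$ has an associated multiset $\pi(\AA) := \{(|A_i^\AA|, |B_i^\AA|)\}_{i=1}^\ell$, and since $U(m)$ acts on the generators $g(A;B)$ preserving $(|A|,|B|)$ and the restriction maps $r_{m,\nu}$ also preserve this grading, one may write $\UU = \sum_\pi \UU_\pi$ with each $\UU_\pi$ lying in $\KKKFPmkU$ (resp.\ $\KKKFQmkU$) in its own right. For a nonzero $\UU_\pi$, Lemma~\ref{L3.3}(3) (resp.\ Lemma~\ref{L3.3}(4)) produces a monomial $\AA$ of $\UU_\pi$ with all indices in $\{1,\ldots,\operatorname{len}(\AA)\}$ (resp.\ $\{1,\ldots,\operatorname{len}(\AA)+1\}$). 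On the other hand, linear independence of monomials combined with $r_{m,k-1}\UU_\pi = 0$ (resp.\ $r_{m,k}\UU_\pi = 0$) rules out any monomial of $\UU_\pi$ with all indices $\leq k-1$ (resp.\ $\leq k$); hence $\operatorname{len}(\AA) \geq k$. The order identity $2k = \ord(\AA) = \sum_i(|A_i|+|B_i|-2) \geq 2\operatorname{len}(\AA)$ supplies the reverse bound, so $\operatorname{len}(\AA) = k$ and $|A_i|+|B_i| = 4$. With $|A_i|, |B_i| \geq 2$ this forces $|A_i| = |B_i| = 2$, and only $\pi = ((2,2),\ldots,(2,2))$ can contribute; (1) follows.

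For Assertion~(2), every monomial is now of the form $g(A_1;B_1)\cdots g(A_k;B_k)$ with $|A_i|=|B_i|=2$ (augmented by $\partial_{z_{\alpha_\AA}} \circ \partial_{\bar z_{\beta_\AA}}$ in the $\QQF$ case). Apply Lemma~\ref{L3.3}(3) (resp.\ Lemma~\ref{L3.3}(4) together with the rearrangement at the end of its proof placing $\alpha_\AA = k+1$) to fix a monomial whose holomorphic indices lie in $\{1,\ldots,k\}$ (resp.\ $\{1,\ldots,k+1\}$ with $\alpha_\AA = k+1$); Lemma~\ref{L3.3}(1) then automatically confines the anti-holomorphic indices to the same range. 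The plan is to perform successive extremal refinements via Lemma~\ref{L3.3}(2). First, minimize $\sum_\alpha \deg_\alpha(\AA)^2$ over such monomials: whenever $\deg_\alpha > 2 > \deg_\beta$, the index change $\alpha \to \beta$ together with case~(b) of Lemma~\ref{L3.3}(2) yields a companion strictly reducing the sum, forcing the balanced distribution $\deg_\alpha = 2$ for $\alpha \in \{1,\ldots,k\}$. Second, among balanced monomials, maximize the number of factors $A_i = (\alpha_i, \alpha_i)$ by a similar extremal procedure: any $A_i = (\alpha, \beta)$ with $\alpha \neq \beta$ gives rise, via an index move confined to that slot and Lemma~\ref{L3.3}(2), to a companion with an additional diagonal factor. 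A parallel argument on the $B_i$'s yields (d)--(e); a dualized form of the Lemma~\ref{L3.3}(4) rearrangement applied to anti-holomorphic indices (obtained from the symmetric analog of Lemma~\ref{L3.3}(2), swapping the roles of holomorphic and anti-holomorphic throughout) gives (f). Finally, the Weyl group $S_k \subset U(k) \subset U(m)$ permutes $\{1,\ldots,k\}$ and one reorders the commuting $g$-factors to realize $\alpha_i = i$, proving (b).

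The principal obstacle is case~(a) of Lemma~\ref{L3.3}(2), which preserves the primary extremal quantity and hence does not descend it by itself; a secondary lexicographic tie-breaker on the internal structure of the pairs $(A_i, B_i)$ is needed to force termination of the extremal iteration, relying on the finiteness of monomials at any fixed distribution together with the guarantee $\AA_1 \neq \AA$ from Lemma~\ref{L3.3}(2). A subtler technical point, arising only in the $\QQF$ case, is obtaining $\alpha_\AA = k+1$ and $\beta_\AA = k+1$ in the \emph{same} monomial: since Lemma~\ref{L3.3}(2) handles only one index change at a time, one must interleave the holomorphic and anti-holomorphic rearrangements carefully so that the corrections to one slot do not disrupt the other.
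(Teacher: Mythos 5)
Your treatment of Assertion~(1) is essentially the paper's argument (the paper decomposes by length rather than by the finer partition type, but the mechanism --- Lemma~\ref{L3.3}~(3)/(4) plus the kernel condition to force $\operatorname{len}(\AA)\ge k$, then the order count $2k=\sum_i(|A_i|+|B_i|-2)\ge 2\ell$ to force equality --- is identical and sound). For Assertion~(2), however, there are two genuine gaps. First, your opening ``balancing'' step fails as stated: you propose to minimize $\sum_\alpha\deg_\alpha(\AA)^2$ and derive a contradiction from a companion monomial, but Lemma~\ref{L3.3}~(2) only guarantees that the companion $\AA_1$ arises from \emph{either} a holomorphic change $\alpha\rightarrow\beta$ (which leaves every $\deg_\gamma$ unchanged) \emph{or} an anti-holomorphic change $\bar\beta\rightarrow\bar\alpha$ (which lowers your quantity). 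If only the first alternative occurs there is no contradiction at the extremal monomial, and this is not a termination issue that a lexicographic tie-breaker can repair --- the extremal configuration may simply admit no descending companion. The paper sidesteps this entirely by skipping the balancing step and choosing an extremal quantity, the number of indices with $A_i^\AA=(\alpha,\alpha)$, that strictly increases under \emph{both} alternatives of Lemma~\ref{L3.3}~(2); that is what makes the holomorphic normalization (2a)--(2c) go through.

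Second, and more seriously, the anti-holomorphic normalization (2d)--(2f) is precisely where the paper has to work hardest, and you have named the difficulty without resolving it. Once the $A_i$ are diagonalized, maximizing the number of anti-holomorphically self-touching indices and applying Lemma~\ref{L3.3}~(2) can return a companion $\AA_1$ obtained by a \emph{holomorphic} index change, which destroys the normalization $A_i=(i,i)$ already achieved; there is no a priori reason this alternative cannot occur. The paper excludes it by the long degree-chasing argument of Cases~I--III: it iterates Lemma~\ref{L3.3}~(2) along a chain $\AA,\AA_1,\AA_2,\AA_3$, tracking $\deg_x$ and $\deg_{\bar x}$ at each stage, until it either forces some index to have degree zero --- contradicting the fact (established after the reduction to $m=k+1$) that every index $1,\dots,k+1$ appears in every monomial of $\QQ_{k+1,k}$ --- or produces a monomial with strictly more anti-holomorphic self-touchings that still satisfies (2a)--(2b). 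Your proposal replaces this entire analysis with the remark that one must ``interleave the holomorphic and anti-holomorphic rearrangements carefully''; that is the theorem's real content at this point, not a technical footnote, and without it the proof of (2d)--(2f) is missing. You should also record explicitly the reduction to $m=k+1$ and the ``every index appears in every monomial'' fact, since these are what power the final contradictions.
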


\begin{proof} The length $\operatorname{len}(\AA)=\ell$ of a monomial is unchanged by the action of $U( m)$.
Decompose
$$\UU=\UU_1+\UU_2+\dots\text{ where }
\UU_\ell:
=\sum_{\operatorname{len}(\AA)=\ell}c(\AA,\UU)\AA\,.
$$
Thus in proving Assertion~(1), we may suppose $\UU
=\UU_\ell$ for some $\ell$. Let $\AA$ be any monomial of $\UU$.
\begin{enumerate}
\item Suppose $\UU\in\KKKFPmkU=\ker(r_{m,k-1})\cap\PPFmkU$. By Lemma~\ref{L3.3}~(3),
we can choose a monomial
$\AA$ of $\UU$ so that no index other than $\{1,\dots,\ell\}$ appears in $\AA$.
As $r_{m,k-1}(\UU)=0$, there exists an index $\alpha\ge k$ so that
$\deg_\alpha(\AA)>0$. Consequently, $\ell\ge k$.
\item Suppose $\UU\in\KKKFQmkU=\ker(r_{m,k})\cap\QQQFU$. By Lemma~\ref{L3.3}~(4),
we can choose a monomial
$\AA$ of $\UU$ so that no index other than $\{1,\dots,\ell+1\}$ appears in $\AA$.
Since $r_{m,k,\QQF}(\UU)=0,$ there exists an index $\alpha\ge k+1$ so that
$\deg_\alpha(\AA)>0$.  This once again implies $\ell\ge k$.
\end{enumerate}
Since $|A_i^\AA|\ge2$ and $|B_i^\AA|\ge2$, we may estimate:
$$2 k=\ord(\AA)=\sum_{i=1}^{\ell}\left\{|A_i^\AA|+|B_i^\AA|-2\right\}\ge
2\ell\ge2 k\,.$$
Consequently, all these inequalities must have been equalities.
Thus shows that $|A_i^\AA|=|B_i^\AA|=2$ and therefore that $\UU$ only involves
the $2$-jets of the metric; the covariant derivatives of the curvature tensor
play no role. It also shows that $\ell= k$ so
$\operatorname{len}(\AA)= k$. Assertion~(1) now follows.

We shall assume $\UU=\QQ\in\KKKFQmkU=\ker(r_{m,k})\cap\QQQFU$
as the case in which $\UU\in\KKKFPmkU=\ker(r_{m,k-1})\cap\PPFmkU$ is similar.
We define
$$\QQ_{ k+1, k}=\sum_{\deg_\alpha(\AA)=0\text{ for }\alpha> k+1}
c(\UU,\AA)\AA\,.$$ This is invariant under the action of $U( k+1)$
and the argument given above shows $\QQ_{ k+1, k}\ne0$. Furthermore,
every index $\{1,\dots, k+1\}$ appears in every monomial of $\QQ_{
k+1, k}$ and thus $\QQ_{ k+1, k}\in \KKF_{\QQF, k+1, k}^{U}$. Finally,
every monomial of $\QQ_{k+1, k}$ is a monomial of $\UU$.
This shows that we may assume that the complex dimension is
$m=k+1$ in the proof of Assertion~(2); this is the crucial case. Thus every
monomial $\AA$ of $\QQ_{ k+1, k}$ contains as holomorphic indices
exactly the indices $\{1,\dots, k+1\}$ and also contains exactly these
indices as anti-holmorphic indices.

 We say that a holomorphic index $\alpha$ {\it touches itself in $\AA$} if we have
$A_i^\AA=(\alpha,\alpha)$ for some $i$. Choose a monomial $\AA$ of $\QQ_{ k+1, k}$ so
the number of holomorphic indices which touch themselves in $\AA$ is maximal.
By making a coordinate permutation,
we may assume without loss of generality the indices which touch themselves
holomorphically in $\AA$ are the indices $\{1,\dots,\nu\}$. Consequently
$A_i^\AA=(i,i)$ for $i\le\nu$. Suppose $\nu< k$.
Both the indices $\nu+1$ and $\nu+2$ appear holomorphically in $\AA$ since every
index $\{1,\dots, k+1\}$ appears in $\AA$. Since only one index can appear in
$\partial_{z_{\alpha_\AA}}$, we may assume that $A_{\nu+1}^\AA=(\nu+1,\sigma)$.
Furthermore, by the maximality of $\nu$, we have $\nu+1\ne\sigma$. Express
$$\AA=g(1,1;\star,\star)\dots g(\nu,\nu;\star,\star)g(\nu+1,\sigma;\star,\star)\AA_0$$
where ``$\star$" indicates indices not of interest and where $\AA_0$ is a suitably
chosen monomial.
We apply Lemma~\ref{L3.3}~(2)
to construct $\tilde\AA$ by changing a single holomorphic index $\sigma\rightarrow\nu+1$:
$$\tilde\AA=g(1,1;\star,\star)\dots g(\nu,\nu;\star,\star)g(\nu+1,\nu+1;\star,\star)\AA_0\,.$$
We apply Lemma~\ref{L3.3}~(2) to choose a monomial $\AA_1\ne\AA$ of
$\QQ_{ k+1, k}$. There are two possibilities:
\begin{enumerate}
\item If $\AA_1$ transforms to $\tilde\AA$ by changing an anti-holomorphic
index $\overline{\nu+1}$ to $\bar\sigma$,
then the holomorphic indices are unchanged and we have found a monomial $\AA_1$
of $\QQ_{ k+1, k}$ where one more index touches itself holomorphically.
This contradicts the choice of $\AA$ such that the number of indices touching themselves
holomorphically is maximal.
\item If $\AA_1$ transforms to $\tilde\AA$ by changing a holomorphic index $\sigma$ to $\nu+1$, then
we can not have changed $A_i^\AA$ for $i\le\nu$ since the index $\nu+1$ does not appear here.
Furthermore, since $A_{\nu+1}^{\tilde A}=(\nu+1,\nu+1)$, and since $\AA_1\ne\AA$,
that variable was not changed.
Thus
$$\AA_1=g(1,1;\star,\star)\dots g(\nu,\nu;\star,\star)g(\nu+1,\nu+1;\star,\star)\tilde\AA_0$$
and again, one more index touches itself holomorphically.
This contradicts the choice of $\AA$ such that the number of indices touching themselves
holomorphically is maximal.
\end{enumerate}

We have shown $\nu= k$. This establishes Assertion~(2a). Since every index must in fact appear in
$\AA$, no index can touch itself holomorphically in $\AA$ in two
different variables. Thus after permuting the indices appropriately,
we have that
$$\AA=g(1,1;\star,\star)\dots g( k, k;\star,\star)\partial_{z_{ k+1}}
\circ\partial_{\bar z_\star}\,.$$ This establishes
Assertion~(2b) and Assertion~(2c).

We will use the same argument to establish the remaining assertions; the
analysis is slightly more tricky since we do not want to destroy the
normalizations of Assertions~(2a) and (2b). Let $\AA$ be a
monomial of $\QQ_{ k+1, k}$ which satisfies the normalizations of
Assertions~(2a) and (2b). Let $\sigma\le k$. Then $\sigma$
appears twice holomorphically in $\AA$ and hence by
Lemma~\ref{L3.3}~(1) also appears anti-holomorpically in $\AA$
twice. The index $\sigma= k+1$ appears once holomorphically in $\AA$
and once anti-holomorphically in $\AA$. Choose $\AA$ so the number
$\nu$ of indices which touch themselves anti-holomorphically in
$\AA$ is maximal. If $\nu= k$, then we are done. So we assume
$\nu<k$ and argue for a contradiction. By permuting the indices, we may
assume the indices $\overline 1,\dots,\overline\nu$ touch themselves anti-holomorphically
in $\AA$ and that the index $\overline{\nu+1}$ does not touch itself
anti-holomorphically in $\AA$. Since $\overline{\nu+1}$ appears twice
anti-holomorphically, it must touch some other index $\bar x$
anti-holomorphically. Express:
$$\AA=g(\star,\star;\overline{\nu+1},\bar x)\AA_0\,.$$
Change the anti-holomorphic index $\bar x$ to an anti-holomorphic index
$\overline{\nu+1}$  to form:
$$\tilde\AA=g(\star,\star;\overline{\nu+1},\overline{\nu+1})\AA_0\,.$$
We use Lemma~\ref{L3.3}~(2)
to construct a monomial $\AA_1$ of $\QQ_{ k+1, k}$ different from
$\AA$. If $\AA_1$ transforms to $\tilde\AA$ by changing an
anti-holomorphic index $\bar x$ to the anti-holomorphic index $\overline{\nu+1}$, then
the fact that $i$ touches itself anti-holomorphically for $i\le\nu$ is not
spoiled and since $\AA\ne\AA_1$, $\overline{\nu+1}$ touches itself anti-holomorphically
in $\AA_1$. Since only the anti-holomorphic indices
are changed, the normalizations of Assertions~(2a) and (2b)
are not affected. Thus one more index would touch itself
anti-holomorphically in $\AA_1$ than is the case in $\AA$ and this
would contradict the maximality of $\nu$. Thus $\AA_1$ transforms to
$\AA$ by changing a holomorphic index $\nu+1$ to $x$. This destroys
the normalizations of Assertion~(2a). There are several
possibilities which we examine seriatim; we shall list the generic
case but if the variables collapse, this plays no role. In what
follows, we permit $x=y$.

\medbreak\noindent{\bf Case I:} The index $x$ appears once in $\AA$.
 Let $\star$ indicate
a term not of interest.
Let $\varepsilon$ be either a $\partial_{z_\alpha}\circ\partial_{\bar z_\beta}$ variable
or a $g(-,-;-,-)$ variable to have a uniform notation and to avoid
multiplying the cases unduly; we shall not fuss about the number of indices in $\varepsilon$
and thus the second $\star$ could be the empty symbol if $\epsilon(\star;\bar\beta,\star)$
indicates
the $\partial_{z_\alpha}\circ\partial_{\bar z_\beta}$ variable whereas the first $\star$
could indicate two indices if $\epsilon(\star;\bar\beta,\star)$ denotes a
$g(\star,\star;\bar\beta,\star)$ variable.
Let $\AA_0$ be an auxiliary monomial. We may express
\medbreak\qquad
$\AA=g(\nu+1,\nu+1;\star,\star)g(\star,\star;\overline{\nu+1},\bar x)
\varepsilon(\star;\overline{\nu+1},\star)\varepsilon(x,\star;\star)\AA_0$, where
\medbreak\qquad\qquad$\deg_{\nu+1} (\AA)=2,\quad\deg_{\overline{\nu+1}}(\AA)=2,\quad\deg_x(\AA)=1,\quad
\deg_{\bar x}(\AA)=1$
\medbreak\noindent
We change an anti-holomorphic index $\bar x$ to an anti-holomorphic index
$\overline{\nu+1}$ to construct:
\medbreak\qquad
$\tilde\AA=g(\nu+1,\nu+1;\star,\star)g(\star,\star;\overline{\nu+1},\overline{\nu+1})
\varepsilon(\star;\overline{\nu+1},\star)\varepsilon(x,\star;\star)\AA_0$, where
\medbreak\qquad\qquad
$\deg_{\nu+1} (\tilde\AA)=2,\quad\deg_{\overline{\nu+1}}(\tilde\AA)=3,\quad\deg_x(\tilde\AA)=1,\quad
\deg_{\bar x}(\tilde\AA)=0$.
\medbreak\noindent Since $\AA_1$ transforms to $\tilde\AA$
by changing a holomorphic index $\nu+1$ to a holomorphic
index $x$, $\deg_{\bar x}(\AA_1)=0$ which is impossible since every index from $1$
to $ k+1$ appears in every
monomial of $\QQ_{ k+1, k}$.

\medbreak\noindent{\bf Case II:}  The index $x$ appears twice in $\AA$ and does not
appear in $\partial_{\bar z_\beta}$.
Then
\medbreak\qquad
$\AA=g(\nu+1,\nu+1;\star,\star)g(x,x;\star,\star)g(\star,\star;\overline{\nu+1},\bar x)
g(\star,\star;\bar x,\bar z)$
\smallbreak\qquad\qquad\qquad
$\varepsilon(\star;\overline{\nu+1},\star)\varepsilon(\star;\bar z,\star)\AA_0$, where
\smallbreak\qquad\qquad
$\deg_{\nu+1} (\AA)=2,\quad\deg_{\overline{\nu+1}}(\AA)=2,\quad\deg_x(\AA)=2,\quad
\deg_{\bar x}(\AA)=2$,
\medbreak\qquad
$\tilde\AA=g(\nu+1,\nu+1;\star,\star)g(x,x;\star,\star)g(\star,\star;\overline{\nu+1},\overline{\nu+1})
g(\star,\star;\bar x,\bar z)$
\smallbreak\qquad\qquad\qquad$\varepsilon(\star;\overline{\nu+1},\star)\varepsilon(\star;\bar z,\star)\AA_0$, where
\smallbreak\qquad\qquad$\deg_{\nu+1} (\tilde\AA)=2,\quad\deg_{\overline{\nu+1}}(\tilde\AA)=3,
\quad\deg_x(\tilde\AA)=2,\quad
\deg_{\bar x}(\tilde\AA)=1$, and
\medbreak\qquad
$\AA_1=g(\nu+1,\nu+1;\star,\star)g(\nu+1,x;\star,\star)g(\star,\star;\overline{\nu+1},\overline{\nu+1})
g(\star,\star;\bar x,\bar z)$
\smallbreak\qquad\qquad\qquad
$\varepsilon(\star;\overline{\nu+1},\star)\varepsilon(\star;\bar z,\star)\AA_0$, where
\smallbreak
$\qquad\qquad\deg_{\nu+1} (\AA_1)=3,\quad\deg_{\overline{\nu+1}}(\AA_1)=3,\quad\deg_x(\AA_1)=1,\quad
\deg_{\bar x}(\AA_1)=1$.
\medbreak\noindent
We permit $z=\nu+1$.
We change an anti-holomorphic index $\bar x$ to $\bar z$ to create:
\medbreak\qquad
$\tilde\AA_1=g(\nu+1,\nu+1;\star,\star)g(\nu+1,x;\star,\star)g(\star,\star;\overline{\nu+1},\overline{\nu+1})
g(\star,\star;\bar z,\bar z)$
\smallbreak\qquad\qquad\qquad
$\varepsilon(\star;\overline{\nu+1},\star)\varepsilon(\star;\bar z,\star)\AA_0$, where
\smallbreak\qquad\qquad
$\deg_{\nu+1} (\tilde\AA_1)=3,\quad\deg_{\overline{\nu+1}}(\tilde\AA_1)=3,
\quad\deg_x(\tilde\AA_1)=1,\quad
\deg_{\bar x}(\tilde\AA_1)=0$.
\medbreak\noindent Again, we construct $\AA_2$.
 If we transform $\AA_2$ to $\tilde\AA_1$ by changing a
holomorphic index $z$ to a holomorphic index $x$, then
\medbreak\qquad\qquad
$\deg_{\nu+1} (\AA_2)=3,\quad\deg_{\overline{\nu+1}}(\AA_2)=3,\quad\deg_x(\AA_2)=0,\quad
\deg_{\bar x}(\AA_2)=0$.
\medbreak\noindent This contradicts the fact that $\deg_x(\AA_2)>0$.
Consequently $\AA_2$ transforms
to $\tilde\AA_1$ by changing an anti-holomorphic index $\bar x$ to an anti-holmorphic index
$\bar z$.
Since $\AA_2\ne\AA_1$,
\medbreak\qquad
$\AA_2=g(\nu+1,\nu+1;\star,\star)g(\nu+1,x;\star,\star)g(\star,\star;\overline{\nu+1},\overline{\nu+1})
g(\star,\star;\bar z,\bar z)$
\smallbreak\qquad\qquad\qquad
$\varepsilon(\star;\overline{\nu+1},\star)\varepsilon(\star;\bar x,\star)\AA_0$, where
\smallbreak\qquad\qquad$\deg_{\nu+1} (\AA_2)=3,\quad\deg_{\overline{\nu+1}}(\AA_2)=3,\quad\deg_x(\AA_2)=1,\quad
\deg_{\bar x}(\AA_2)=1$.
\medbreak\noindent
We have simply interchanged the anti-holomorphic indices $\bar x$ and $\bar z$
to construct $\AA_2$ from $\AA_1$.
We construct $\tilde\AA_2$ by changing a holomorphic index $\nu+1$ to $x$ to create:
\medbreak\qquad
$\tilde\AA_2=g(\nu+1,\nu+1;\star,\star)g(x,x;\star,\star)g(\star,\star;\overline{\nu+1},\overline{\nu+1})
g(\star,\star;\bar z,\bar z)$
\smallbreak\qquad\qquad\qquad
$\varepsilon(\star;\overline{\nu+1},\star)\varepsilon(\star;\bar x,\star)\AA_0$, where
\smallbreak\qquad\qquad$\deg_{\nu+1} (\tilde\AA_2)=2,\quad\deg_{\overline{\nu+1}}(\tilde\AA_2)=3,
\quad\deg_x(\tilde\AA_2)=2,\quad\deg_{\bar x}(\tilde\AA_2)=1$.
\medbreak\qquad
We consider $\AA_3$. Since $\AA_3\ne\AA_2$, $\AA_3$ does not transform to $\tilde\AA_2$
by changing a holomorphic index $\nu+1$ to $x$. Instead, $\AA_3$ transforms to $\tilde\AA_2$
by transforming an anti-holomorphic index $\bar x$ to an anti-holomorphic index
$\overline{\nu+1}$.
There are two possibilities
\medbreak\qquad
$\AA_3=g(\nu+1,\nu+1;\star,\star)g(x,x;\star,\star)g(\star,\star;\overline{\nu+1},\overline{\nu+1})
g(\star,\star;\bar z,\bar z)$
\smallbreak\qquad\qquad\qquad
$\varepsilon(\star;\bar x,\star)\varepsilon(\star;\bar x,\star)\AA_0$, or
\medbreak\qquad
$\AA_3=g(\nu+1,\nu+1;\star,\star)g(x,x;\star,\star)g(\star,\star;\overline{\nu+1},\bar x)
g(\star,\star;\bar z,\bar z)$
\smallbreak\qquad\qquad\qquad
$\varepsilon(\star;\overline{\nu+1},\star)
\varepsilon(\star;\bar x,\star)\AA_0$.
\medbreak\noindent Both these possibilities satisfy the normalization of
Assertion~(2a). And there is either one more
anti-holomorphic or two more anti-holomorphic indices which touch
themselves. This is impossible by the maximality of $\AA$.

\medbreak\noindent{\bf Case III:} The index $x$ appears twice in $\AA$
and appears in $\partial_{\bar z_\beta}$. Then $\nu+1 $ does not appear in
$\partial_{\bar z_\beta}$ and hence some other variable $g_{\star,\star;\nu+1,y}$ appears
in $\AA$. If $\deg_y(\AA)=1$, then Case I pertains. If $\deg_y(\AA)=2$, then Case II pertains.
This final contradiction establishes the Lemma.
\end{proof}

\subsection{The crucial estimate} Let $\rho( k)$ be the number of partitions of $ k$
as described in Definition~\ref{D1.1}.

\begin{lemma}\label{L4.3}
If $ m> k$, then $\dim\{\KKKFQmkU\}\le\rho( k)$ and $\dim\{\KKKFPmkU\}\le\rho( k)$.
\end{lemma}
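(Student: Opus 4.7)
The plan is to produce, for each of the two spaces, an explicit injective linear map into $\mathbb{C}^{\rho(k)}$, using Lemma~\ref{L4.2} in an essential way. For each permutation $\sigma \in S_k$ define the normal-form monomials
$$\AA_\sigma^{\mathfrak{P}} := g(1,1;\sigma(1),\sigma(1))\cdots g(k,k;\sigma(k),\sigma(k)),$$
$$\AA_\sigma^{\mathfrak{Q}} := \AA_\sigma^{\mathfrak{P}} \cdot \partial_{z_{k+1}}\circ\partial_{\bar z_{k+1}}.$$
By Lemma~\ref{L4.2}(2), any nonzero element of $\KKKFPmkU$ (respectively $\KKKFQmkU$) contains some $\AA_\sigma^{\mathfrak{P}}$ (respectively $\AA_\sigma^{\mathfrak{Q}}$) as a monomial with nonzero coefficient for at least one $\sigma \in S_k$; this is the essential structural input that makes the counting tractable.

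Next I would exploit invariance under the subgroup $S_k \subset U(k) \subset U(m)$ of permutation matrices on the first $k$ indices. A direct check using the formula for the $U(m)$-action on the variables $g(A;B)$ shows that a permutation matrix $\tau \in S_k$ sends $\AA_\sigma^{\mathfrak{P}}$ to $\AA_{\tau\sigma\tau^{-1}}^{\mathfrak{P}}$, and similarly for $\AA_\sigma^{\mathfrak{Q}}$ (the extra factor $\partial_{z_{k+1}}\circ\partial_{\bar z_{k+1}}$ is untouched since $\tau$ fixes the index $k+1$). Since $\UU$ is $U(m)$-invariant, the coefficient $c(\AA_\sigma, \UU)$ depends only on the conjugacy class of $\sigma$ in $S_k$, i.e.\ only on the cycle type of $\sigma$, which is a partition of $k$. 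There are exactly $\rho(k)$ such partitions.

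Choose representatives $\{\sigma_\pi\}$, one for each partition $\pi$ of $k$, and define linear maps
$$\phi_{\mathfrak{P}}: \KKKFPmkU \to \mathbb{C}^{\rho(k)},\qquad \phi_{\mathfrak{Q}}: \KKKFQmkU \to \mathbb{C}^{\rho(k)},$$
by $\UU \mapsto (c(\AA_{\sigma_\pi}, \UU))_\pi$. By the invariance argument these values determine every coefficient $c(\AA_\sigma, \UU)$ for $\sigma \in S_k$. If $\phi(\UU) = 0$, then no monomial of the normal form $\AA_\sigma$ appears in $\UU$, which by Lemma~\ref{L4.2}(2) forces $\UU = 0$. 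Hence $\phi$ is injective, and the desired bound $\dim \le \rho(k)$ follows in both cases.

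The main delicate point is the verification of the coefficient transformation rule $c(T_\tau \AA_\sigma, T_\tau \UU) = c(\AA_\sigma, \UU)$: one must check that when the permutation matrix $T_\tau$ acts on $\UU$, the image of a normal-form monomial $\AA_\sigma$ is again a single normal-form monomial $\AA_{\tau\sigma\tau^{-1}}$, rather than a linear combination that mixes with other monomials of $\UU$. Because $T_\tau$ acts by permuting basis vectors (no nontrivial phases and no off-diagonal entries), this is clean; but this cleanness is the feature that makes the counting by cycle types — and hence by partitions — work, and it is the only step beyond Lemma~\ref{L4.2} that requires attention. Everything else is formal bookkeeping.
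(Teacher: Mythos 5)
Your argument is correct and is essentially the paper's own proof: both use Lemma~\ref{L4.2} to reduce to the normal-form monomials $\AA_\sigma$, observe via invariance under the permutation subgroup of $U(m)$ that the coefficient $c(\AA_\sigma,\UU)$ depends only on the cycle type of $\sigma$, and conclude that the $\rho(k)$ coefficients indexed by partitions give an injective map into $\mathbb{C}^{\rho(k)}$. Your write-up merely makes explicit the conjugation rule $\AA_\sigma\mapsto\AA_{\tau\sigma\tau^{-1}}$ and the injectivity step that the paper compresses into the phrase ``only the conjugacy class of $\sigma$ is important.''
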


 \begin{proof}
 Let $0\ne\QQQ\in\KKKFQmkU$. Apply Lemma~\ref{L4.2} to find a monomial $\AA$ of
$\QQQ$ so that
$$\AA_\sigma=g(1,1;\bar\sigma(1),\bar\sigma(1))g(2,2;\bar\sigma(2),\bar\sigma(2))\dots
g( k, k;\bar\sigma( k),\bar\sigma( k))\partial_{z_{ k+1}}\circ\partial_{\bar z_{ k+1}}$$
where $\sigma\in\operatorname{Perm}( k)$ is a suitably chosen permutation.
Thus $\QQQ\ne0$ implies $c(\AA_\sigma,\QQQ)\ne0$
for some $\sigma$. Only the conjugacy class of $\sigma$ in
$\operatorname{Perm}( k)$ is important
and, writing the permutation $\sigma$ in terms of cycles, we see that there are
$\rho( k)$ such conjugacy classes; ordering the lengths of these cycles in
decreasing order determines a partition $\pi$.
Thus there are $\rho( k)$ monomials $A_\pi$ so that $\QQQ\ne0$
implies $c(\AA_\pi)\ne0$; the inequality $\dim\{\KKKFQmkU\}\le\rho( k)$ now follows.
The proof of the inequality $\dim\{\KKKFPmkU\}\le\rho( k)$ is analogous and is therefore omitted.
\end{proof}

\section{The proofs of Theorem~\ref{T1.1}, Theorem~\ref{T1.2}, and
Theorem~\ref{T1.3}}\label{S5}

\subsection{The proof of Theorem~\ref{T1.1} and of Theorem~\ref{T1.2}}
Let $m\ge k$. By Lemma~\ref{L3.2}, $\XiPmk$ is a 1-1 map from $\xSSSFmk$ to
$\KKKFPmkU$. By Equation~(\ref{E1.d}), we have that $ \dim\{\xSSSFmk\}=\rho( k)$
By Lemma~\ref{L4.3}, $\dim\{\KKKFPmkU\}\le\rho(k)$.
Consequently
$$\dim\{\KKKFPmkU\}=\dim\{\xSSSFmk\}=\rho(k)$$ and
$\XiPmk$ is
an isomorphism. This proves Theorem~\ref{T1.1}. The same line of argument
shows that $\XiQmkT$ is an isomorphism from $\xSSSFmk$ to
$\KKKFQmkU$; this establishes Theorem~\ref{T1.2}.

\subsection{The proof of Theorem~\ref{T1.3}}
We must show $\TQmk=\XiQmkT$. Suppose to the contrary that
 $\TQmk\xSSmk\ne\XiQmkT\xSSmk$ for some $\xSSmk\in\xSSSFmk$.
We apply Lemma~\ref{L3.2} and Lemma~\ref{L3.2x} to see
$$0\ne r_{m,k+1}\{\TQmk-\XiQmkT\}\xSSmk=\{\TQmk-\XiQmkT\}(r_{m,k+1}\xSSmk)\,.$$
Thus we may suppose without loss of generality that $m=k+1$. We apply the argument used to establish
Lemma~\ref{L3.2}~(3). Let $\MMK_\epsilon:=\NNk\times{{\mathcal{T}_\epsilon^1}}$ where
the metric on {$\mathcal{T}_\epsilon^1$} is $(1+\epsilon)dw\circ d\bar w$.
Since the metric on $\NNk$
is unchanged and only the volume element on $M$ is changing,
\begin{equation}\label{E5.a}
\begin{array}{l}
\frac1{k!}g_\epsilon(\xSSKk(\mathcal{R}_{\MM_\epsilon}),\Omega_\epsilon^k)=
\frac1{k!}g(r_{k+1,k}\xSSKk(\mathcal{R}_\NN),\Omega_\NN^k),\\
\partial_\epsilon\left\{g_\epsilon(\xSSKk(\mathcal{R}_{\MM_\epsilon}),\Omega_\epsilon^k)\right\}=0,\gronk\\
\partial_\epsilon\left\{d\nu_{\MM_\epsilon}\right\}=d\nu_\MM=d\nu_\NN d\nu_{\mathcal{T}}
\,.\gronk
\end{array}\end{equation}
Since $\mathcal{T}^1$ has volume $1$, we may use Equation~(\ref{E5.a}) to compute:
\begin{equation}\label{E5.b}
\begin{array}{l}
\displaystyle
\partial_\epsilon\left.\left\{\frac1{k!}\int_Mg_\epsilon(\xSSKk(\mathcal{R}_{\MM_\epsilon}),\Omega_\epsilon^k)
d\nu_{\MM_\epsilon}\right\}\right|_{\epsilon=0}\\
\displaystyle\quad=
\frac1{k!}\int_Ng(r_{k+1,k}\xSSKk(\mathcal{R}_{\NN}),\Omega^k)d\nu_\NN\,.\vphantom{\vrule height 20pt}
\end{array}\end{equation}
Since $N$ has complex dimension $k$, we have
\begin{equation}\label{E5.c}
\frac1{k!}\int_Ng(r_{k+1,k}\xSSKk(\mathcal{R}_{\NN}),\Omega^k)d\nu_{\NN}
=\int_Nr_{k+1,k}\xSSKk(\mathcal{R}_\NN)\,.
\end{equation}

By Lemma~\ref{L3.2x}, $\TQKk\xSSKk\in\KKKFQKkU$. By Theorem~\ref{T1.2},
$\XiQKkT$ is an isomorphism from $\xSSSFKk$ to $\KKKFQKkU$. Thus we may find
$\tilde{\xSS}_{k+1,k} \in \xSSSFKk$ so that we have  $\XiQKkT{\tilde\xSS}_{k+1,k}=\TQKk\xSSKk$. Consequently:
\begin{equation}\label{E5.d}
\begin{array}{l}
\displaystyle\partial_\epsilon\left.\left\{\frac1{k!}\int_Mg(\xSSKk(\mathcal{R}_{\MM_\epsilon}),\Omega_\epsilon^k)
d\nu_{\MM_\epsilon}\right\}\right|_{\epsilon=0}\\
\qquad=\displaystyle\int_M\langle
\TQKk\xSSKk(\RR_\MM),h\rangle d\nu_g\vphantom{\vrule height 20pt}\\
\qquad=\displaystyle\int_M\langle
\XiQmkT{\tilde\xSS}_{k+1,k}(\RR_\MM),h\rangle d\nu_g\,.\vphantom{\vrule height 20pt}
\end{array}\end{equation}
We use the definition and the argument  used to establish Equation~(\ref{E5.c}) to compute:
\begin{eqnarray}
&&\int_M\langle\XiQKkT{\tilde\xSS}_{k+1,k}(\RR_\MM),h\rangle d\nu_g\nonumber\\
&=&\frac1{(k+1)!}\int_Mg(\tilde{\xSS}_{k+1,k}(\RR_\MM)\wedge e^\alpha\wedge\bar e^\beta,
\Omega_\MM^{k+1})\langle e_\alpha\circ\bar e_\beta,h\rangle {{d\nu_g}}\nonumber\\
&=&\frac1{k!}\int_Mg(r_{k+1,k}\tilde S_{k+1,k}(\mathcal{R}_\NN),\Omega_{\NN}^k)
d\nu_\NN d\nu_\TT\nonumber\\
&=&\int_Nr_{k+1,k}\tilde S_{k+1,k}(\mathcal{R}_\NN)\,.\label{E5.e}
\end{eqnarray}
We use Equation~(\ref{E5.b}),  Equation~(\ref{E5.c}), Equation~(\ref{E5.d}), and Equation~(\ref{E5.e})
to see
$$\int_Nr_{k+1,k}\{\xSSKk-\tilde{\xSS}_{k+1,k}\}{{(\mathcal{R}_\NN)}}=0\,.$$
Since $\NNk$ was an arbitrary K\"ahler manifold of complex dimension $k$, we may apply
Lemma~\ref{L1.1} to see $r_{k+1,k}\{\xSSKk-\tilde{\xSS}_{k+1,k}\}=0$. By Remark~\ref{R1.1},
$\xSSKk=\tilde{\xSS}_{k+1,k}$ and consequently $\XiQKkT\xSSKk=\TQKk\xSSKk$.
This completes the proof of Theorem~\ref{T1.3}.\hfill\qed

\subsection*{Acknowledgements} This work was supported by
the National Research Foundation of Korea (NRF) grant funded by the
Korea government (MEST)(2011-0012987),
by project MTM2009-07756 (Spain), and by project 174012 (Serbia).

 \end{document}